 \def\bna{B_{\epsilon,n}(x)}
 \def\bnb{\widetilde{B}_{\epsilon,n}(x)}
 \def\bnc{\widetilde{\partial B}_{\epsilon,n}(x)}
 \def\tb{\widetilde{B}}
  \def\tm{\tilde{m}}
 \def\cn{\mathcal{C}_{\beta, n}(x)}
\newcommand{\diam}{{\text{diam}}}
\newcommand{\sgn}{{\text{sgn}}}
\newtheorem{thm}{Theorem}
\newtheorem{lemma}{Lemma}
\newtheorem{proposition}{Proposition}
\title[Entry times distribution for mixing systems]{Entry times distribution for mixing systems}
  \date{\today}
\author{N Haydn}
\thanks{N Haydn, Department of Mathematics, University of Southern California,
Los Angeles, 90089-2532. E-mail: {\tt {nhaydn@usc.edu}}.} 
\author{F Yang}
\thanks{F Yang, Department of Mathematics, University of Southern California,
Los Angeles, 90089-2532. E-mail: {\tt {yang617@usc.edu}}.}
\begin{document}

\begin{abstract}
We consider the return times dynamics to Bowen balls for continuous maps on metric spaces
which have invariant probability measures with certain mixing properties.
These mixing properties are satisfied for instance by systems that allow
Young tower constructions. We show that the higher order return times to 
Bowen balls are in the limit Poisson distributed.
We also provide a general result for the asymptotic behavior of the recurrence
time for Bowen balls for ergodic systems and those with specification.
\end{abstract}

\maketitle

\section{Introduction}

Recently there has been a great interest in the statistics of return times to 
small sets and theire limiting distributions as the target sets shrink to a point
and the observation time is scaled accordingly as suggested by Kac's theorem.
Lacroix and Kupsa~\cite{KL,L02} have shown that the shrinking of the target sets 
has to done in a dynamical or geometric regular way as their examples show that
 otherwise any limiting distribution could be achieved. The first dynamical result is
 due to Doeblin~\cite{Doe} who showed that for the Gauss map higher order returns in the 
 neighbourhood of the origin are Poissonian distributed in the limit. In main stream 
 dynamics, Pitskel was the first one consider the limiting distribution for
 Axiom~A systems and showed in 1990 that for cylinders the return times are 
 Poissonian in the limit and, by an approximation argument, also for 
 metric balls for hyperbolic maps on two dimensional torii. In the successive years,
 a  sequence of  results then established that returns to cylinder sets in the limit 
 become Poissonian under increasingly general conditions
  (see e.g.~\cite{Den,Hirata2,WTW,DGS,Abadi08,AV3,HP,Kif12}.
 Similar results have recently been proven for geometric balls
 (see e.g.~\cite{CC13,HW,PS}). For dynamical balls, which are the metric equivalent of
 cylinder sets and which are are used in the 
 construction of equlibrium states and the formulation of entropy for continuous maps on 
 metric spaces, much less is known. According to a result of
 Varandas~\cite{Var} the exponential growth rate of the  recurrence time
 equals the entropy. Previously, Brin and Katok~\cite{BK} have proven a
 Shannon-McMillan-Breiman type theorem for Bowen balls. This paper builds on~\cite{HY} 
 where limiting distributions of entry and return times were
 determined together and rates of convergence we given. The principal assumption is 
 that the given invariant probability measure is $\phi$-mixing or $\alpha$-mixing.
 Although this seems  restrictive, all systems that allow a Young tower
 construction~\cite{Y2,Y3} do satisfy the $\alpha$-mixing property.
 
 In the next section we give the main results. In Section~\ref{section.recurrence}
 we prove Theorem~\ref{period} which states that for ergodic, positive entropy systems
 the minimal recurrene time grows at least linearly. 
 In Section~\ref{alpha.poisson} we prove a general result on the higher order return
 distributions for $\alpha$-mixing systems, where the return sets can be unions
 of cylinders over a countably infinite alphabet. For that purpose we use the 
 Chen-Stein method of which we give a short sketch at the beginning of the section.
 This result is then used in Section~\ref{bowen.poisson} to prove the first two 
 main theorems which in fact follow from the more general Theorem~\ref{t5}.

 \section{Main results}

Let $(X, T, \mu)$ be a measure preserving system with $T:X\to X$ continuous and
$\mu$ a $T$-invariant probability measure which we assume to be ergodic with entropy $h(\mu)>0$. 
For $A\subset X$ we denote by $W_{A,m}(x)$ the number of visits of the orbit 
$\left\{T(x),T^2(x),\dots ,T^{m}(x)\right\}$ (for some  $m\in\mathbb{N}$) to the set $A$, i.e. 
$$
W_{A,m}(x)=\sum\limits_{j=1}^{m}\chi_A(T^j(x))
$$
where $\chi_A$ is the characteristic function of the set $A$, i.e.\
$\chi_A(x)=1$ if $x\in A$ and $\chi_A(x)=0$ otherwise.  The purpose of this 
paper is to get results on the distributions of $W$ in the case when the return set $A$ is
 a Bowen ball and the cutoff values $m$ for the length of the orbits are scaled by
 the measures of the return set.
 Clearly $W_{A,m}(x)=0$ if the entry/return time $\tau_A(x)$ is larger than $m$,
 where $\tau_A(x)=\min\{j\ge1: T^jx\in A\}$.

Let $\mathcal{A}$ be a finite measurable partition and denote 
by $\mathcal{A}^n=\bigvee_{j=0}^{n-1}T^{-1}\mathcal{A}$ the $n$th join ($n$-cylinders).
We assume that $\mathcal{A}$ is generating, i.e.\ that $\mathcal{A}^\infty$ consists of 
single points. For a set $Y\subset X$ we shall use the notation 
$A_n(Y)=\bigcup_{A\in\mathcal{A}^n, \,A\cap Y\not=\varnothing}A$ as the smallest union
of $n$-cylinders that approximates $Y$ from the outside. In particular $A_n(x)$ denotes
the $n$-cylinder that contains $x$.

We shall require that the measure $\mu$ have some mixing property with respect to this partition $\mathcal{A}$. To be more precise, we say that $\mu$ is {\em $\phi$-mixing} if
$$
|\mu(A\cap T^{-n-k}B) -\mu(A)\mu(B)| \le \phi(k)\mu(B)
$$
for all $A \in \sigma(\mathcal{A}^n)$, $B \in \sigma(\bigcup_j\mathcal{A}^j)$, where $\phi(k)$ is a decreasing function that converges to $0$. Similarly, we say that $\mu$ is {\em $\alpha$-mixing} if 
$$
|\mu(A\cap T^{-n-k}B) -\mu(A)\mu(B)| \le \alpha(k)
$$
for some decreasing function $\alpha(k)$ that converges to $0$.

We will also require some regularity of the measure. For $x \in X, 0<\delta<\epsilon$ we define
$$
\psi(\epsilon, \delta, x) = \frac{\mu(B(x,\epsilon+\delta))-\mu(B(x,\epsilon -\delta))}{\mu(B(	x,\epsilon))}
$$
as in \cite{HY}. $\psi$ measures the proportion of the measure of the annulus $B(x,\epsilon+\delta)\setminus B(x,\epsilon -\delta)$ to the ball $B(x, \epsilon)$.

For $\epsilon>0$ and $n \in \mathbb{N}$ we define the {\em $(\epsilon, n)$-Bowen ball} as usual:
$$
\bna = \{y: \sup_{0 \le k < n} d(T^kx, T^ky)<\epsilon\}.
$$
Bowen balls have the property that they capture the local dynamics in metric 
spaces and are used to define entropy, pressure and prove the existence of 
equilibrium states for given potential functions (see e.g.~\cite{Wal}). In many ways
Bowen balls play on metric spaces the r\^ole that cylinder sets play in symbolic systems. For instance,
according to Brin and Katok~\cite{BK} one has the metric analogue of the theorem of 
Shannon-McMillan-Breimann:
$$
\lim_{\epsilon\to0}\lim_{n\to\infty}\frac1n|\log\mu(\bna)|=h(\mu)
$$
for $\mu$-almost every $x$ provided $\mu$ is  ergodic.
Similarly, Varandas~\cite{Var} provided us with the metric equivalent of Ornstein-Weiss' formula
for the {\em recurrence time} $R_{\epsilon,n}(x)=\min\{j\ge1: T^jx\in\bna(x)\}$, according
to which 
$$
\lim_{\epsilon\to0}\lim_{n\to\infty}\frac1n\log R_{\epsilon,n}(x)=h(\mu)
$$
$\mu$ almost everywhere for ergodic $\mu$.
In a previous paper~\cite{HY} we studied the distribution of the first entry and return times.
Here we take up the subject of higher order returns. We have the following result.

\begin{thm}\label{t1} Assume that the invariant measure $\mu$ is $\phi$-mixing where
$\phi(n) =\mathcal{O}( \frac{1}{n^{2+\kappa}})$ and 
 $\mbox{diam}(\mathcal{A}^n)=\mathcal{O}(\gamma^{n^\xi})$ for some $\gamma < 1$, $\xi \le 1$. 
 Moreover assume that $\mu$ satisfies the following regularity condition
$$
\psi(\epsilon, \delta, x) \le \frac{C_{\epsilon}}{|\log \delta|^{\zeta}}
$$
for some $\zeta>1/\xi$ , $C_{\epsilon}>0$ independent of $x$. Put $m = \frac{t}{\mu(\bna)}$.

Then there exists $\epsilon_0>0$ so that for every $\epsilon<\epsilon_0$ we have
$$
\lim\limits_{n \rightarrow \infty}  \mathbb{P}\big(W_{\bna, m} =k \big) 
=e^{-t}\frac{t^k}{k!}
$$ 
almost surely.
\end{thm}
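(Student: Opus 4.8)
The plan is to deduce Theorem~\ref{t1} from the more general Poisson statement for Bowen balls, Theorem~\ref{t5}, which itself is obtained by approximating the Bowen balls from the outside by finite unions of cylinders and then invoking the general $\alpha$-mixing higher-order return result proved in Section~\ref{alpha.poisson} by the Chen--Stein method. First I would record the elementary observation that a $\phi$-mixing measure is in particular $\alpha$-mixing with $\alpha(k)\le\phi(k)$, and that the rate $\phi(n)=\mathcal{O}(n^{-2-\kappa})$ makes $\phi$ summable with room to spare, which is what the Chen--Stein error terms will need. The entire argument is run for a fixed $x$ in a set of full $\mu$-measure; membership in that set is secured by the Brin--Katok theorem of~\cite{BK} (to control the exponential decay of $\mu(\bna)$), by Theorem~\ref{period} (to exclude short recurrences of $\bna$), and by a Borel--Cantelli argument applied to the regularity functional $\psi$ evaluated along the orbit of $x$, using the hypothesis $\psi(\epsilon,\delta,\cdot)\le C_\epsilon/|\log\delta|^\zeta$.

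\emph{Step 1 (sandwiching the Bowen ball between cylinders).} For fixed $\epsilon$ and $n$ I choose a refinement length $\ell=\ell(n)$ and set $\bnb=A_{n+\ell}(\bna)$, the smallest union of $(n+\ell)$-cylinders containing $\bna$, with $\bnc=\bnb\setminus\bna$ the discrepancy. Since $\diam(\mathcal{A}^{n+\ell})=\mathcal{O}(\gamma^{(n+\ell)^\xi})\le\mathcal{O}(\gamma^{\ell^\xi})=:\delta$, two points in a common $(n+\ell)$-cylinder stay within $\delta$ for the first $n$ iterates, whence $\bnb\subseteq B_{\epsilon+\delta,n}(x)$ and $\bnc$ sits inside the $n$-step dynamical annulus of radii $\epsilon\pm\delta$. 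Combining this with the regularity bound and $|\log\delta|\asymp\ell^\xi$ gives, as in~\cite{HY}, an estimate of the shape $\mu(\bnc)/\mu(\bna)\lesssim n\,C_\epsilon\,\ell^{-\xi\zeta}$. Because $\xi\zeta>1$ one can pick $\ell$ with $n^{1/(\xi\zeta)}\ll\ell\ll n$, so this relative error tends to $0$ and $\mu(\bnb)/\mu(\bna)\to1$; hence the cutoffs $m=t/\mu(\bna)$ and $\tm=t/\mu(\bnb)$ are interchangeable in the limit and it suffices to establish the Poisson law for $W_{\bnb,\tm}$ (equivalently, for the $k$-th entry time $\tkbnb$).

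\emph{Step 2 (applying the general $\alpha$-mixing theorem).} The set $\bnb$ is a finite union of cylinders of length $n+\ell$, i.e.\ a union of cylinders over the refined alphabet $\mathcal{A}^\ell$, which is exactly the framework of the higher-order return theorem of Section~\ref{alpha.poisson}. To check its hypotheses for $\bnb$ one needs, apart from the mixing rate and the scaling $\tm=t/\mu(\bnb)$, a bound on the self-overlap sum $\tm\sum_{1\le d\le\Delta}\mu(\bnb\cap T^{-d}\bnb)$, where $\Delta$ is the Chen--Stein block length; this quantity must vanish as $n\to\infty$. I would split the sum at $d=n+\ell$: for $d\le n+\ell$ the intersection is empty once $n$ is large, by Theorem~\ref{period} (the minimal recurrence time of $\bna$ grows at least linearly, and $\ell=o(n)$), while for $d>n+\ell$ the $\phi$-mixing inequality gives $\mu(\bnb\cap T^{-d}\bnb)\le\mu(\bnb)^2+\phi(d-n-\ell)\mu(\bnb)$, so $\tm$ times the sum is $\le\tm\,\mu(\bnb)\big(\Delta\mu(\bnb)+\sum_{k\ge1}\phi(k)\big)$, which is $o(1)$ provided $\Delta\mu(\bnb)\to0$. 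The long-range Chen--Stein term is bounded by a multiple of $\tm\,\phi(\Delta)$, and choosing $\Delta$ to grow only polynomially in $n$ makes both $\Delta\mu(\bnb)\to0$ and $\tm\,\phi(\Delta)\to0$, using $\mu(\bnb)\approx e^{-n h(\mu)}$ and $\phi(n)=\mathcal{O}(n^{-2-\kappa})$. The general theorem then yields $\mathbb{P}(W_{\bnb,\tm}=k)\to e^{-t}t^k/k!$, and Step~1 transfers this to $W_{\bna,m}$ for $\mu$-almost every $x$.

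\emph{The hard part} will be the quantitative bookkeeping in Step~1: the refinement length $\ell$ must be large enough that $(n+\ell)$-cylinders resolve the $\epsilon$-ball well enough to overcome the factor $n$ coming from the $n$ faces of the Bowen ball, and yet the $\psi$-regularity must be strong enough that $\mu(\bnb)/\mu(\bna)\to1$ simultaneously — this is exactly where the exponent inequality $\zeta>1/\xi$ is used, and it must be reconciled with the requirement that the convergence hold almost surely rather than merely in measure, which is arranged through Brin--Katok and Borel--Cantelli along the orbit of $x$. A secondary, more routine difficulty is the short-return estimate feeding the Chen--Stein bound, which rests entirely on Theorem~\ref{period}; any slack there would have to be absorbed by estimating the remaining $\mathcal{O}(n)$ intersections directly, keeping their total contribution $o(1/\tm)$.
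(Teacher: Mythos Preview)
There is a genuine gap in Step~1. The claimed relative bound $\mu(\bnc)/\mu(\bna)\lesssim n\,C_\epsilon\,\ell^{-\xi\zeta}$ is not what the regularity hypothesis delivers. The function $\psi(\epsilon,\delta,T^kx)$ measures the annulus $B(T^kx,\epsilon+\delta)\setminus B(T^kx,\epsilon-\delta)$ relative to the \emph{single} ball $B(T^kx,\epsilon)$, whose measure is of order~$1$, not relative to $\mu(\bna)$. Following the annulus decomposition (this is exactly Lemma~\ref{l5}) one only gets $\mu(\bnc)\le n\sup_k\psi(\epsilon,\gamma_{N-k},T^kx)\lesssim n\,\ell^{-\xi\zeta}$ in absolute terms, so that $\mu(\bnc)/\mu(\bna)\lesssim n\,\ell^{-\xi\zeta}\mu(\bna)^{-1}$. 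Since $\mu(\bna)$ decays like $e^{-nh(\mu)}$ by Brin--Katok, the choice $n^{1/(\xi\zeta)}\ll\ell\ll n$ is far too small; one is forced to take the refinement length exponential in $n$, namely $N(n)=\mu(\bna)^{-\eta}$ with $\eta\in(1/(\xi\zeta),1)$, which is precisely what the paper does in its proof of Theorem~\ref{t1} via Theorem~\ref{t5}.

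This correction then breaks Step~2 as written. With $\bnb\in\sigma(\mathcal{A}^{N(n)})$ and $N(n)$ exponentially large, the $\phi$-mixing inequality only separates $\bnb$ from $T^{-d}\bnb$ once $d>N(n)$, so Theorem~\ref{period} (which gives $\tau(\bna)\gtrsim n$, not $\gtrsim N(n)$) does not make the short-return sum empty, and your union bound for $d>N(n)$ leaves the non-vanishing tail $\sum_k\phi(k)$. Likewise, choosing $\Delta$ polynomial in $n$ cannot force $\phi(\Delta)/\mu(\bnb)\to0$ when $\phi$ is only polynomially decaying and $\mu(\bnb)^{-1}$ is exponentially large; one needs $\Delta=\mu(\bnb)^{-\eta'}$ with $\eta'\in(1/(2+\kappa),1)$. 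The paper handles the resulting intermediate range $n/2\le j\le N(n)$ not with Theorem~\ref{Poisson_cylinder} but with the finer $\phi$-mixing error term $\delta_{\tilde B}(j)=\min_\omega\{\mu(A_\omega(\tilde B))+\phi(j-\omega)\}$ from~\cite{HP}, bounding $\mu(A_\omega(B))$ via the cluster estimate of Lemma~\ref{cluster of cylinders} (so that $\mu(\mathcal{C}_{\beta,\omega}(x))\le\nu^{\omega/2}$) and summing the three ranges $[n/2,2n)$, $[2n,N(n)]$, $(N(n),\Delta]$ separately. That combinatorial input, not just Theorem~\ref{period}, is what makes the short-return contribution $o(1/n)$.
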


If the measure has better regularity then we can relax the condition on the diameter of cylinders
and obtain the following statement:

 \begin{thm}\label{t1'}
Assume that there exist constants $\alpha, \kappa, \xi>0$ satisfying $\alpha \xi > 1$, such that $\diam(\mathcal{A}^n) = \mathcal{O}(n^{-\alpha})$, $\phi(n) = \mathcal{O} (n^{-(2 + \kappa)})$ and
$$
\psi(\epsilon, \delta, x) \le C_\epsilon \delta^\xi
$$ 
for some constant $C_\epsilon$ independent of $\delta$ and $x$. 

Then there exists $\epsilon_0>0$ so that for every $\epsilon<\epsilon_0$ we have
$$
\lim\limits_{n \rightarrow \infty}  \mathbb{P}\big(W_{\bna, m} =k \big) 
=e^{-t}\frac{t^k}{k!}
$$ 
almost surely, where $m = \frac{t}{\mu(\bna)}$.
\end{thm}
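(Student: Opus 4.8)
The plan is to deduce Theorem~\ref{t1'} from the general Theorem~\ref{t5}; equivalently, to carry out, for this case, the approximation scheme underlying that theorem. First note that $\phi$-mixing with $\phi(n)=\mathcal{O}(n^{-(2+\kappa)})$ entails $\alpha$-mixing with $\alpha(n)\le\phi(n)=\mathcal{O}(n^{-(2+\kappa)})$, so the higher-order Poisson theorem for unions of cylinders of Section~\ref{alpha.poisson} (Chen--Stein) is available. I would then sandwich the Bowen ball between cylinder sets: choose a length $N=N(n)\ge n$ and put
$$
\mathcal{B}^{-}=\bigcup\{A\in\mathcal{A}^{N}:A\subseteq\bna\}\ \subseteq\ \bna\ \subseteq\ A_{N}(\bna)=\mathcal{B}^{+},
$$
so that, since $\diam(\mathcal{A}^{N})\le\diam(\mathcal{A}^{n})=\mathcal{O}(n^{-\alpha})$, one has $B_{\epsilon-\diam(\mathcal{A}^{N}),n}(x)\subseteq\mathcal{B}^{-}$ and $\mathcal{B}^{+}\subseteq B_{\epsilon+\diam(\mathcal{A}^{N}),n}(x)$. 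Since $W_{\mathcal{B}^{-},m}\le W_{\bna,m}\le W_{\mathcal{B}^{+},m}$ pointwise, hence $\mathbb{P}(W_{\mathcal{B}^{+},m}\le k)\le\mathbb{P}(W_{\bna,m}\le k)\le\mathbb{P}(W_{\mathcal{B}^{-},m}\le k)$, it then suffices to apply the cylinder Poisson theorem to $\mathcal{B}^{\pm}$ with parameter $m\mu(\mathcal{B}^{\pm})=t\,\mu(\mathcal{B}^{\pm})/\mu(\bna)$, let $n\to\infty$, and use the continuity of $s\mapsto e^{-s}s^{k}/k!$; this closes the argument provided $\mu(\mathcal{B}^{\pm})/\mu(\bna)\to1$.

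The quantitative heart is the control of the annular error together with a lower bound for $\mu(\bna)$. One has
$$
\mu(\mathcal{B}^{+})-\mu(\mathcal{B}^{-})\ \le\ \mu\bigl(B_{\epsilon+\delta,n}(x)\bigr)-\mu\bigl(B_{\epsilon-\delta,n}(x)\bigr),\qquad\delta=\diam(\mathcal{A}^{N}),
$$
and I would bound the right-hand side by writing the difference of Bowen balls as a union over the $n$ coordinates of $B_{\epsilon+\delta,n}(x)$ meeting the metric annuli $B(T^{k}x,\epsilon+\delta)\setminus B(T^{k}x,\epsilon-\delta)$, then using the regularity hypothesis $\psi(\epsilon,\delta,\cdot)\le C_{\epsilon}\delta^{\xi}$ together with the mixing property (to detach the $k$-th coordinate from the remaining constraints) to obtain a bound of order $n\,\delta^{\xi}\,\mu(\bna)$; since $\delta\le\diam(\mathcal{A}^{n})=\mathcal{O}(n^{-\alpha})$ this is $\mathcal{O}(n^{1-\alpha\xi})\mu(\bna)$, which is $o(\mu(\bna))$ exactly because $\alpha\xi>1$. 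For the denominator I would use the Brin--Katok theorem, which, together with $h(\mu)>0$, yields $\epsilon_{0}>0$ such that for $\epsilon<\epsilon_{0}$ the measure $\mu(\bna)$ decays exponentially fast at a rate $h_{\epsilon}>0$, almost surely. That exponential decay is precisely what forces the $n^{1-\alpha\xi}$-type errors here, and the error terms of the Poisson approximation, to vanish.

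It remains to check the hypotheses of the cylinder Poisson theorem for $\mathcal{B}^{\pm}$. The declustering window $\Delta$ and the cylinder length $N$ must be chosen with $N<\Delta\ll\mu(\bna)^{-1}$ and $\phi(\Delta-N)\ll\mu(\bna)$, which is feasible because $\phi$ is summable at a polynomial rate while $\mu(\bna)$ is exponentially small; here again $\alpha\xi>1$ is what reconciles the lower bound on $N$ from the approximation step with the upper bounds on $N$ and $\Delta$ from the mixing terms. The nontrivial input is the ``no short return'' estimate $\sum_{p=1}^{\Delta}\mu(\mathcal{B}^{\pm}\cap T^{-p}\mathcal{B}^{\pm})=o(\mu(\bna))$: for $p$ below a fixed multiple $\rho n$ of $n$ the intersection is empty by Theorem~\ref{period} (the minimal recurrence time of $\bna$ grows at least linearly), while for $\rho n\le p\le\Delta$ I would decouple via $\phi$-mixing with a gap of order $p$, producing bounds of the shape $\bigl(e^{-cp}+\phi(cp)\bigr)\mu(\bna)$ whose sum over $p$ is $o(\mu(\bna))$. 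The main obstacle, I expect, is this simultaneous balancing — keeping the annular error genuinely small relative to the exponentially small $\mu(\bna)$ while keeping $N$ and $\Delta$ inside the window tolerated by the mixing errors — which is exactly where the three quantitative assumptions and $\alpha\xi>1$ all enter at once; the decoupling of $B_{\epsilon+\delta,n}(x)$ against a single metric annulus, which must use mixing and not merely the pointwise $\psi$-bound, is the subsidiary technical point.
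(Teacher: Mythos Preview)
Your overall strategy---approximate $\bna$ by a union of $N(n)$-cylinders, apply the cylinder Poisson theorem to that union, and show the approximation error is negligible---is exactly the route the paper takes via Theorem~\ref{t5}. The short-return control via Theorem~\ref{period} and the splitting of the sum over return times are likewise in line with the paper's argument.

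The genuine gap is in your annular estimate. You claim
\[
\mu\bigl(B_{\epsilon+\delta,n}(x)\bigr)-\mu\bigl(B_{\epsilon-\delta,n}(x)\bigr)=\mathcal{O}(n\,\delta^{\xi})\,\mu(\bna),
\]
with the factor $\mu(\bna)$ obtained by ``using the mixing property to detach the $k$-th coordinate from the remaining constraints''. This step does not go through under the stated hypotheses. The $\phi$-mixing property is formulated with respect to the partition $\mathcal{A}$, not metric balls, and in any case the $k$-th coordinate $T^{-k}B(T^kx,\epsilon)$ sits in the middle of the Bowen ball with no temporal gap on either side; there is nothing to mix across. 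What one can actually prove (this is Lemma~\ref{l5}) is the bound without the $\mu(\bna)$ factor,
\[
\mu\bigl(\bnc\bigr)\ \le\ n\cdot\sup_{0\le k<n}\psi(\epsilon,\gamma_{N(n)-k},T^kx)\ \le\ C_\epsilon\, n\,\gamma_{N(n)-n}^{\,\xi}.
\]
With your choice $\delta\le\diam(\mathcal{A}^n)=\mathcal{O}(n^{-\alpha})$ this yields only $\mathcal{O}(n^{1-\alpha\xi})$, which is polynomially small while $\mu(\bna)$ is exponentially small; the ratio diverges and the sandwich does not close. You flagged this decoupling as ``the subsidiary technical point'', but it is in fact the place where the argument breaks.

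The paper's remedy is not to look for a mixing-based detachment but to make $N(n)$ itself exponentially large: take $\eta\in(1/(\alpha\xi),1)$ and set $N(n)=\mu(\bna)^{-\eta}$. Then $\gamma_{N(n)}^{\,\xi}=\mathcal{O}(N(n)^{-\alpha\xi})=\mathcal{O}(\mu(\bna)^{\eta\alpha\xi})$ with $\eta\alpha\xi>1$, so $n\,\gamma_{N(n)}^{\,\xi}=o(\mu(\bna))$ outright and condition~\eqref{t5e} of Theorem~\ref{t5} holds with $\vartheta_n(\epsilon)=C'_\epsilon\, n\,\mu(\bna)^{\eta\alpha\xi-1}\to0$. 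The constraint $\eta<1$ keeps $N(n)<\mu(\bna)^{-1}$, so the cylinder length stays below the observation window and the mixing errors remain controllable---exactly the balancing you anticipated, but achieved by enlarging $N$ rather than by an (unavailable) factorisation at coordinate~$k$.
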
 

The proof of these two theorems is in Section~\ref{bowen.poisson}. In the proof of these theorems we need some estimate on the minimum return time of points in $\bna$, which is in the next section.

\section{Recurrence time for dynamical balls}\label{section.recurrence}

For a set $A\subset X$ we have the  first hitting time of a point $x$ given by 
$\tau_A(x) = \min\{k>0: T^k(x) \in A\}$.
The {\em period} of the set $A$ is then given by 
$$
\tau(A) = \min\{k>0: T^{-k}(A) \cap A \ne \emptyset \}
$$
which evidently equals $\tau(A) = \min_{x\in A}\tau_A(x)$.
The statement of the following theorem is well known for cylinder sets~\cite{STV}
and will here be proven for Bowen balls.

\begin{thm}\label{period}Assume that $\mu$ is ergodic with entropy $h(\mu)>0$. \\
(i) Then for almost every $x \in X$ 
$$
\lim\limits_{\epsilon \to 0} \liminf\limits_{n \to \infty} \frac{\tau(\bna)}{n} \ge 1.
$$
(ii) If, moroever, the map $T$ has specification, then 
$$
 \limsup\limits_{n \to \infty} \frac{\tau(\bna)}{n} \le 1
$$
for all $\epsilon$ small enough.
\end{thm}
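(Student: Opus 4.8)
For part (i), the plan is to reduce the statement about Bowen balls to the known statement about cylinders. The key observation is that a Bowen ball $\bna$ is not itself a cylinder, but because the partition $\mathcal{A}$ is generating and $\diam(\mathcal{A}^n)\to 0$, for any fixed $\epsilon$ we can find some $p=p(\epsilon)$ so that every atom of $\mathcal{A}^p$ has diameter less than $\epsilon$; then $\bna$ is contained in a union of atoms of the refined partition $\mathcal{A}^{n+p}$ that are "dynamically $\epsilon$-close" along the first $n$ iterates. More usefully, since $T^k(y)$ stays within $\epsilon$ of $T^k(x)$ for $0\le k<n$, the point $y$ lies in the cylinder $A_{n}(x')$ for a suitable $x'$ once $\diam(\mathcal{A}^{n})$ is small — so a period-$\tau$ coincidence $T^{-\tau}\bna\cap\bna\ne\emptyset$ forces a coincidence of nearby cylinders $T^{-\tau}A_{n}(y_1)\cap A_{n}(y_2)\ne\emptyset$ with $y_i\in\bna$. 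I would then invoke the Brin--Katok theorem, $\lim_{\epsilon\to0}\liminf_{n\to\infty}\frac1n|\log\mu(\bna)|=h(\mu)>0$, to control the measures of these cylinders, and combine it with the Borel--Cantelli argument that underlies the cylinder version of the result in \cite{STV}: the expected number of $x$ whose Bowen ball has an early recurrence (period $\le (1-\eta)n$) is summable, because such $x$ lie in $T^{-j}\bna$ for some $j\le(1-\eta)n$, and the total measure of these preimages is $\sum_{j\le(1-\eta)n}\mu(\bna)\le (1-\eta)n\,\mu(\bna)=(1-\eta)n\,e^{-n(h(\mu)+o(1))}$, which is summable in $n$. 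A Borel--Cantelli argument then gives $\liminf_n\tau(\bna)/n\ge 1-\eta$ almost surely for each rational $\eta>0$, hence $\ge 1$ after letting $\epsilon\to0$ and $\eta\to0$.

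For part (ii), the plan is the complementary direction, using specification to produce, for each small $\epsilon$ and each large $n$, an actual point in $\bna$ whose first return occurs at a time $\le (1+o(1))n$. Specification says: for any $\epsilon>0$ there is a gap $N=N(\epsilon)$ such that any finite collection of orbit segments can be $\epsilon/2$-shadowed by a single orbit, provided consecutive segments are separated by at least $N$ iterates; applying this to the single segment $x,Tx,\dots,T^{n-1}x$ repeated with gap $N$ produces a point $y$ that $\epsilon/2$-shadows this block, then after $n+N$ steps shadows it again. Such $y$ lies in $B_{\epsilon,n}(x)$ (taking $\epsilon/2<\epsilon$) and satisfies $T^{n+N}y\in B_{\epsilon,n}(x)$ as well, so $\tau_{B_{\epsilon,n}(x)}(y)\le n+N$, whence $\tau(\bna)\le n+N(\epsilon)$. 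Dividing by $n$ and letting $n\to\infty$ gives $\limsup_n\tau(\bna)/n\le 1$ for every fixed small $\epsilon$, as claimed. (One should check the standard subtlety that $y$ can be taken in the ball centered at $x$ itself, or center-shift $\epsilon$ slightly; this is routine.)

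The main obstacle I anticipate is part (i), specifically making rigorous the passage from Bowen balls to cylinders while keeping the measure estimate uniform enough for Borel--Cantelli. The difficulty is that $\bna$ is a fattened object: it is not a cylinder and its measure can be genuinely larger than that of the cylinder $A_n(x)$, so one cannot naively replace $\mu(\bna)$ by $e^{-nh(\mu)}$ without justification. The clean way around this is to avoid covering by cylinders entirely and argue directly: the set of $x$ with $\tau(\bna)\le(1-\eta)n$ is contained in $\bigcup_{1\le j\le (1-\eta)n} T^{-j}\bna\cap \bna$, whose measure is bounded by $\sum_{j} \mu(T^{-j}\bna)=\sum_j\mu(\bna)$, and then one needs that $\mu(\bna)$ decays exponentially in $n$ on a full-measure set — which is exactly Brin--Katok. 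The only care needed is that Brin--Katok gives a limit in $n$ for each fixed $\epsilon$ (after the outer $\liminf$), so one fixes $\epsilon$ small, gets $\mu(\bna)\le e^{-n(h(\mu)-\eta)}$ for all large $n$ on a full-measure set, runs Borel--Cantelli along integers $n$, and finally takes a countable intersection over $\epsilon\to 0$ and $\eta\to0$ through rationals. A secondary technical point is handling non-integer $m$ and the distinction between $\tau(\bna)=\min_{x\in\bna}\tau_{\bna}(x)$ versus the hitting time $\tau_{\bna}(x)$ of the center itself, but since $x\in\bna$ automatically, an early hitting time of the center already forces an early period, so this direction is immediate and only the reverse containment above is needed.
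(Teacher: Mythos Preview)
Your argument for part~(ii) is correct and essentially identical to the paper's: specification with shadowing constant $\epsilon/2$ produces a point $y\in\bna$ with $T^{n+K}y\in\bna$, so $\tau(\bna)\le n+K(\epsilon)$.

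Your argument for part~(i), however, has a genuine gap. The containment you write,
\[
\{x:\tau(\bna)\le(1-\eta)n\}\subset\bigcup_{1\le j\le(1-\eta)n}T^{-j}\bna\cap\bna,
\]
is not well-posed: the right-hand side depends on $x$, so it is not a subset of $X$ in which the bad centers $x$ could lie. More substantively, $\tau(\bna)\le(1-\eta)n$ only says that \emph{some} $y\in\bna$ satisfies $T^jy\in\bna$ for a small $j$; it does \emph{not} force the center $x$ itself to return early, so even the pointwise statement $x\in T^{-j}\bna$ fails. Consequently your measure bound $(1-\eta)n\,\mu(\bna)$ is an $x$-dependent number, not a bound on $\mu(\{x:\tau(\bna)\le(1-\eta)n\})$, and Borel--Cantelli cannot be run. (Note also that even if you switched to the center's own return time $\tau_{\bna}(x)$, the set $\{x:T^jx\in\bna\}$ is \emph{not} $T^{-j}$ of any fixed set, so invariance does not give $\mu(\{x:T^jx\in\bna\})=\mu(\bna)$.) This is exactly the obstacle that makes the Bowen-ball case harder than the cylinder case in \cite{STV}: for a cylinder $A_n(x)$, a short period forces periodic structure in the $n$-word of $x$ itself, whereas a Bowen ball is ``fat'' and can have a short period caused by a point far from $x$ in symbolic terms.

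The paper overcomes this by first sandwiching $\bna$ inside a Hamming-metric cluster $\mathcal{C}_{\beta,n}(x)$ of at most $e^{\delta n}$ many $n$-cylinders (Lemma~\ref{cluster of cylinders}), and then observing that $\tau(\mathcal{C}_{\beta,n}(x))=k$ forces the existence of $y$ with $d_n^H(y,T^ky)\le2\beta$. The set of such $y$ is estimated by a combinatorial word-count (bounding the number of $n$-words with near-periodic block structure) combined with Shannon--McMillan--Breiman, yielding $\mu(\tilde B_n)\le c\,e^{-cn}$ and hence summability. In short, the reduction to cylinders you mention in your first paragraph is the right instinct, but it cannot be bypassed by the ``direct'' Brin--Katok bound you propose afterwards; the Hamming-cluster combinatorics is the missing ingredient.
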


Let us recall that a map $T:X\to X$ has {\em specification} if 
for every $\epsilon>0$ there exists a separation time $K(\epsilon)$ so that any 
 two (in fact arbitrarily many) orbit segments $T^jx, j=0,1,\dots,n_x$ and $T^jy, j=0,1,\dots,n_y$ 
 can be $\epsilon$-shadowed by an actual orbit, that is there exist a point $z\in X$ and 
 $m\le K$ such that $d(T^jz,T^jx)<\epsilon$ for $j=0,1,\dots,n_x$ and 
 $d(T^{n_x+1+m+j}z,T^jy)<\epsilon$ for $j=0,1,\dots,n_y$.

In order to prove the lower bound~(i) we need the following lemma.

\begin{lemma}\label{cluster of cylinders}\cite{BK}
Let $\mathcal{A}$ be a finite generating partition with $\mu(\partial\mathcal{A}) = 0$. Then for all $\delta>0$ there exist $N>0$ and a set $D_N$ with $\mu(D_N)>1-\delta$, such that 
$$
\left|\{ A \in \mathcal{A}^n: A \cap\bna \ne \emptyset\} \right| \le e^{\delta n}\qquad \forall\;x\in D_N
$$
for all $\epsilon$ small enough and $n \ge N$.
\end{lemma}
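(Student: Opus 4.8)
The plan is to deduce this from the Brin–Katok theorem together with a standard covering/maximal-ergodic-theorem argument, exactly as one proves the corresponding statement for cylinder sets. The point of the lemma is to control the number of $n$-cylinders meeting a Bowen ball $\bna$ for a large-measure set of centres $x$; the natural comparison is $\mu(\bna)\approx e^{-nh(\mu)}$ (Brin–Katok) versus the worst-case cylinder measure inside the ball. First I would fix $\delta>0$ and apply the Brin–Katok theorem: for $\mu$-a.e.\ $x$ one has $\lim_{\epsilon\to0}\limsup_{n\to\infty}\frac1n|\log\mu(\bna)|=h(\mu)$, and by Egorov's theorem there is a set $D_N'$ with $\mu(D_N')>1-\delta/2$ and $N$ on which, for all $\epsilon$ small and $n\ge N$, $\mu(\bna)\le e^{-n(h(\mu)-\delta/4)}$.

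Next I would produce a lower bound on the measures of the individual cylinders $A\in\mathcal{A}^n$ that are allowed to meet $\bna$ when $x\in D_N$. The idea is that if $A\cap\bna\ne\varnothing$ then $A$ is contained in a slightly larger Bowen ball $B_{2\epsilon,n}(x)$ (using $\diam(\mathcal{A}^n)\to0$, which holds because $\mathcal{A}$ is generating, to absorb the cylinder diameter into the metric slack — say $\diam(\mathcal{A}^n)<\epsilon$ for $n\ge N$ after enlarging $N$). So each such $A$ lies in $B_{2\epsilon,n}(x)$, and more importantly each such $A$, being an $n$-cylinder, is itself "thick": by the Shannon–McMillan–Breiman theorem applied to the partition $\mathcal{A}$, for $\mu$-a.e.\ $y$ one has $\mu(A_n(y))\ge e^{-n(h_\mu(\mathcal{A})+\delta/4)}$ for $n$ large, and since $\mathcal{A}$ generates $h_\mu(\mathcal{A})=h(\mu)$; another Egorov step gives a set $E_N$ with $\mu(E_N)>1-\delta/2$ on which this holds for all $n\ge N$. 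Then I restrict attention to cylinders $A$ with $A\cap E_N\ne\varnothing$; the cylinders meeting $\bna$ but missing $E_N$ contribute total measure at most $\mu(X\setminus E_N)<\delta/2$, which I handle separately or simply throw into the complement. Setting $D_N=D_N'$ (intersected with whatever full-measure sets the SMB/Brin–Katok limits live on), for $x\in D_N$ the cylinders $A$ meeting $\bna$ and $E_N$ are pairwise disjoint, each of measure $\ge e^{-n(h(\mu)+\delta/4)}$, and their union is contained in $B_{2\epsilon,n}(x)$ hence has measure $\le \mu(B_{2\epsilon,n}(x))\le e^{-n(h(\mu)-\delta/4)}$ (again shrinking $\epsilon$ in the Brin–Katok bound, and possibly relabeling $\epsilon\mapsto 2\epsilon$ at the start). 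Dividing, the number of such cylinders is at most $e^{n\delta/2}$, and then absorbing the at-most-$o(e^{\delta n})$ (indeed, negligible) contribution from cylinders missing $E_N$ — or, more cleanly, rerunning the whole argument with $\delta/3$ in place of $\delta$ — gives $|\{A\in\mathcal{A}^n:A\cap\bna\ne\varnothing\}|\le e^{\delta n}$ for all $x\in D_N$, $n\ge N$, $\epsilon$ small.

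The main obstacle, and the step requiring the most care, is the uniformity: the Brin–Katok and SMB convergences are only pointwise, so the passage to a single $N$ and a single large-measure set $D_N$ valid for all $n\ge N$ simultaneously is where Egorov's theorem is essential, and one must be careful that the "$\epsilon$ small enough" quantifier can be chosen after $N$ (it can, since for fixed $n$ the Bowen balls $\bna$ shrink as $\epsilon\to0$, and the Brin–Katok $\limsup$ is monotone in $\epsilon$). A secondary subtlety is the boundary hypothesis $\mu(\partial\mathcal{A})=0$: it guarantees that the approximation of a Bowen ball by cylinders does not lose measure at cylinder boundaries, so that $A_n(\bna)$ genuinely sits inside a comparable Bowen ball and the SMB lower bound on cylinder measures is applicable to $\mu$-typical points of those cylinders. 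Since the result is quoted from Brin–Katok~\cite{BK}, I would present this as a sketch, emphasizing the two Egorov reductions and the disjointness-packing inequality $N_n(x)\cdot\min_A\mu(A)\le\mu(B_{2\epsilon,n}(x))$, and refer to~\cite{BK} for the routine details.
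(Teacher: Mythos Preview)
Your entropy-packing approach has two genuine gaps, and the paper's argument avoids both by taking a completely different, purely combinatorial route.

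\textbf{Gap 1: the containment $A\subset B_{2\epsilon,n}(x)$ fails.} You argue that if $A\in\mathcal{A}^n$ meets $\bna$ then $A\subset B_{2\epsilon,n}(x)$, invoking $\diam(\mathcal{A}^n)<\epsilon$. But membership in $B_{2\epsilon,n}(x)$ requires $d(T^kz,T^kx)<2\epsilon$ for \emph{all} $0\le k<n$, whereas $\diam(\mathcal{A}^n)<\epsilon$ only controls $d(z,y)$ at time $k=0$. For $z,y$ in the same $n$-cylinder one has $T^kz,T^ky$ in the same element of $\mathcal{A}^{n-k}$, and for $k$ close to $n$ this gives no useful bound (the diameter of an element of $\mathcal{A}$ can be the diameter of $X$). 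So there is no reason the union $A_n(\bna)$ sits inside any Bowen ball of comparable order, and your Brin--Katok upper bound on $\mu(B_{2\epsilon,n}(x))$ does not transfer to $\mu(A_n(\bna))$.

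\textbf{Gap 2: cylinders missing $E_N$ cannot be counted.} Even granting the containment, you only control the \emph{number} of cylinders that meet the SMB-good set $E_N$. Cylinders $A\subset X\setminus E_N$ have total measure at most $\delta/2$, but since such cylinders are precisely those failing the lower bound $\mu(A)\ge e^{-n(h+\delta/4)}$, their individual measures may be arbitrarily small and their number unbounded in terms of $e^{\delta n}$. Neither ``throwing them into the complement'' nor ``rerunning with $\delta/3$'' addresses this: the lemma asks for a count, not a measure.

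The paper's proof uses no entropy at all. Let $U_\epsilon(\mathcal{A})$ be the $\epsilon$-neighbourhood of $\partial\mathcal{A}$; since $\mu(\partial\mathcal{A})=0$ one has $\mu(U_\epsilon(\mathcal{A}))<\beta/2$ for $\epsilon$ small. By Birkhoff, the set $D_N$ of points whose orbits visit $U_\epsilon(\mathcal{A})$ with frequency $<\beta$ has measure $>1-\delta$. The key observation is that for $x\in D_N$ and $y\in\bna$, the $(\mathcal{A},n)$-names of $x$ and $y$ can differ only at times $k$ with $T^kx\in U_\epsilon(\mathcal{A})$, so the Hamming distance $d_n^H(x,y)<\beta$. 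Hence every $n$-cylinder meeting $\bna$ lies in the Hamming ball $\mathcal{C}_{\beta,n}(x)$, whose cardinality is at most $\sum_{m\le\beta n}|\mathcal{A}|^m\binom{n}{m}\le e^{\delta n}$ by Stirling, for $\beta$ small enough. This directly counts cylinders and is exactly where the hypothesis $\mu(\partial\mathcal{A})=0$ enters.
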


\begin{proof} 
For all $\epsilon > 0$, define $U_{\epsilon}(\mathcal{A}) = \bigcup\limits_{A \in \mathcal{A}} U_{\epsilon} (A)$ where 
\[
U_{\epsilon}(A) = \{x \in A: \text{there exist } y \in X\setminus A \text{ with } d(x,y) < \varepsilon  \}
=A\cap B(X\setminus A,\epsilon).
\]
Since $\bigcap\limits_{\epsilon>0} U_{\epsilon}(\mathcal{A}) = \partial \mathcal(A)$, we have $\lim\limits_{\epsilon \to 0} \mu(U_{\epsilon}(\mathcal{A})) = 0 $, and thus for every \(\beta > 0\), 
there exists  \(\epsilon_0\) small enough so that 
$$
\mu(U_\epsilon(\mathcal{A}))< \beta / 2, \text{ for all } \epsilon < \epsilon_0.
$$
By the Birkhoff ergodic theorem,
$$
\lim\limits_{n \to \infty} \frac{1}{n} \sum\limits_{k=0}^{n-1} \chi_{U_\epsilon(\mathcal{A})}(T^k(x)) < \beta / 2, \text{ for a.e } x \in X.
$$
Take $N_1$ so that the set $D_N$ defined by
$$
D_N = \{x \in X: \frac{1}{n} \sum\limits_{k=0}^{n-1} \chi_{U_\epsilon(\mathcal{A})}(T^k(x)) < \beta  \quad \forall n \geq N     \}
$$
satisfies 
$$
\mu(D_N) > 1- \delta , \text{ for all } N > N_1.
$$

Every $n$-cylinder $A_n(x)$ is identified by the $n$-word $x_0 x_1 \cdots x_{n-1}$ 
where $x_k \in\mathcal{A} $. We call this word the $(\mathcal{A}, n)$-name of $A_n(x)$. For all $y \in \bna$ and \(0 \leq k \leq n-1 \), either $T^k(y) \in A_1(T^k(x)) $, or $T^k(x) \in U_\epsilon (\mathcal{A})$. Now let us note that for all $x$ in $D_N$, the frequency of the latter possibility
(i.e.\ $T^k(x) \in U_\epsilon (\mathcal{A})$) is less than $\beta$. In other words,
 $d^H_n(x,y) < \beta$ for all $y \in \bna$, $x \in D_N $ and $n > N$, where $d_n^H$ 
 is the Hamming distance given by 
 $d_n^H(x,y)=\frac1n\sum_{k=0}^{n-1}(1-\delta_{x_k,y_k})$ with $\delta$ denoting the 
 Kronecker symbol.

If we denote  $\cn = \{y: d^H_n (x, y) < \beta \}$ the cluster of $n$-cylinders centred at $x$ then
$$
\bna \subset A_n(\bna) \subset \cn, \quad\forall x \in D_N, n > N.
$$
Since  \(d^H_n(x,y) = 0\) if the points \(x, y\) lie in the same element of $\mathcal{A}^n$, 
$\cn$ is a union of at most $\lambda_n$ elements in $\mathcal{A}^n$, where $\lambda_n$ can 
be estimated by 
\begin{equation}\label{lambda.estimate}
\lambda_n \leq \sum\limits_{m=0}^{[n \eta]}|\mathcal{A}|^m \binom{n}{m}.
\end{equation}
Using Stirling's formula, it is easy to show that 
$$
\limsup\limits_{n \to \infty}\frac{\log\lambda_n}{n} \leq \beta \log|\mathcal{A}| - \beta\log\beta - (1-\beta)\log(1-\beta).
$$
The right-hand-side converges to 0 as \(\beta\) approaches \(0\). For any given $\delta>0$ we can take $\beta$ small enough such that $\lambda_n \le e^{\delta n}$ for all $n\ge N$ for some 
large enough $N$. In particular 
$$
\big|\{A \in \mathcal{A}^n, A \cap \bna \ne \emptyset\}\big| \le e^{\delta n}.
$$
\end{proof}


\begin{proof}[Proof of Theorem~\ref{period}]
Let $\delta>0$ and $D_N$, $N$ as in Lemma~\ref{cluster of cylinders}.
Then for all $x \in D_N$ we have $\bna \subset \cn$,
where $\cn=\{y:d_n^H(x,y)<\beta\}$ with $\beta>0$ being chosen below. Hence
$$
\tau(\bna) \ge \tau(\cn).
$$

For arbitrary $\eta<1$, fix $\zeta < \frac{1-\eta}{8}h$ small and let  
$E_N = \{x: e^{-(h + \zeta)n} \le \mu(\mathcal{A}^n(x)) \le e^{-(h - \zeta)n} \text{ for all } n \ge N \}$. 
By the Theorem of Shannon-McMillan-Breiman,  we can take $N$ large such that 
$\mu(E_N) \ge 1 - \delta$. Set $G_N = D_N \cap E_N$, we have $\mu(G_N) \ge 1 - 2\delta$. 
For a large enough constant $c_1$ (depending on $N$) we achieve that $c_1^{-1}e^{-(h + \zeta)n} \le \mu(\mathcal{A}^n(x)) \le c_1e^{-(h - \zeta)n}$ hold for all $n>0$. 

Define 
$$
B_n = \{x\in G_N: \tau(\bna)< \eta n \}
$$
and
$$
\tilde B_n = \{x\in G_N: \tau(\cn) < \eta n\}.
$$
Clearly $B_n \subset \tilde B_n$ for all $n \le N$.

If we put $R^n(k) = \{x: \tau(\cn) = k\}$ ($k \le [\eta n]$) then 
$\tilde B_n = \bigcup_{k=1}^{\eta n}R^n(k)$ (disjoint union).
In other words, if $x \in R^n(k)$ then $T^j\cn\cap\cn=\varnothing$ for $j=1,\dots,k-1$
 and  there exists some $y \in \cn$ such that $T^k(y) \in \cn$.
Hence we have
 $d^H_n(y, T^k y) \le d_n^H(y,x)+d_n^H(x,T^ky)\le 2 \beta$. Set $\tilde R^n(k) = \{y: d^H_n(y, T^ky) \le 2 \beta\}$ and we obtain
$$
R^n(k) \subset \{x : \text{ there exist } y \in \tilde R^n(k) \text{ such that }d^H_n(x,y) \le \beta  \}.
$$
First we estimate $\mu(\tilde R^n(k))$. For every $y \in \tilde R^n(k)$, let 
$$
A_n(y) = (y_1\ldots y_k y_{k+1} \ldots y_{2k}\ldots y_{mk+1} \ldots y_n)
$$
with $y_i \in \mathcal{A}$, $m = [\frac{n}{k}]$, then 
$$
A_n(T^ky) = (y_{k+1}\ldots y_{2k} y_{2k+1} \ldots y_{3k}\ldots y_{(m+1)k+1} \ldots y_{n+k}).
$$
Let $g_i = \sum\limits_{j=ik+1}^{(i+1)k}(1 - \delta_{y_j, y_{j+k}})$ for $i=1, 2, \ldots, m$, 
where $\delta_{a,b}$ is the standard Kronecker symbol. That is $g_i$ is the number of 
coordinates on which  $y_{ik+1} \ldots y_{(i+1)k}$ and $y_{(i+1)k+1} \ldots y_{(i+2)k}$ differ.
Obviously  $g_i \le k$ and also  $\sum\limits_{i=1}^{m} g_i \le 2\beta n$ as $y\in \tilde R^n(k)$.

For given $(g_1,g_2,\dots,g_m)$ and given $k$-word $y_1y_2\cdots y_k$, the total number of 
 $n$-cylinders $A_n(y)$ that lie in the given $A_k(y)=(y_1y_2\cdots y_k)$ and for which
 $y\in\tilde{R}^n(k) $ is bounded from above by 
\begin{align*}
a_{n, y_1, \ldots, y_k, g_1, \ldots, g_m} 
\le & \binom{k}{g_1} |\mathcal{A}|^{g_1} \binom{k}{g_2} |\mathcal{A}|^{g_2} \ldots \binom{k}{h_m}|\mathcal{A}|^{g_m}\\
\le & \binom{n}{2 \beta n} |\mathcal{A}|^{2 \beta n}.
\end{align*}
To simplify notation, we abbreviate the LHS to $a_n$. By Stirling's formula 
\begin{align*}
\frac{\log a_n}{n} \le 2\beta \log |\mathcal{A}| - (1-2\beta)\log(1-2\beta) - 2\beta \log 2\beta \to 0
\end{align*}
as $\beta \to 0$. We can take $\beta$ small such that $a_n \le e^{\delta n}$ where $\delta>0$
 is as above.

Denote by $b_{n,k}$ the total number of such possible $(g_1,\dots,g_m)\in\{1,2,\dots,k\}^m$.
Then
$$
b_{n,k} = \sum\limits_{j=0}^{[2\beta n]} \binom{j+m-1}{m-1} = \binom{[2 \beta n]+m}{m} = \binom{[2\beta n] + \frac{n}{k}}{\frac{n}{k}}
$$
which, again by Stirling's formula, can be bound as follows:
\begin{align*}
\frac{\log b_{n,k}}{n} \le   f(2\beta + \frac{1}{k} ) - f(\frac{1}{k}) - f(2\beta),
\end{align*}
where we put $f(x) = x\log x$.
Since $f(x) \to 0$ as $x \to 0$ and $f(x)$ is uniformly continuous on $(0,2]$, we have 
$\lim\limits_{\beta \to 0}\frac{\log b_{n,k}}{n} =0$ and in particular
$$
b_{n,k} \le e^{\delta n}
$$
if we only take $\beta$ small enough.

All the above estimates combined now yield:
\begin{align*}
\mu(\tilde R^n(k)) \le & \sum\limits_{y \in \tilde R^n(k)} \mu(A_n(y))\\
\le & \sum\limits_{A_k(y), y \in \tilde R^n(k)} \sum\limits_{g_1, \ldots, g_m} a_n  c_1e^{-(h - \zeta)n}\\
\le & \sum\limits_{A_k(y), y \in \tilde R^n(k)} b_{n,k} c_1e^{-(h - \zeta-\delta)n}\\
\le & \sum\limits_{A_k(y), y \in \tilde R^n(k)} c_1e^{-(h - \zeta- 2\delta)n}.
\end{align*}
For $y \in \tilde R^n(k) \subset G_N$, we have $c_1^{-1}e^{-(h + \zeta)k} \le \mu(A_k(y))$, hence 
$1 \le c_1e^{(h + \zeta)k} \mu(A_k(y))$. Therefore
\begin{align*}
\mu(\tilde R^n(k)) \le & \sum\limits_{A_k(y), y \in \tilde R^n(k)} c_1^2e^{-(h - \zeta- 2\delta)n} e^{(h + \zeta)k} \mu(A_k(y))\\
\le &  c_1^2e^{-(h - \zeta- 2\delta)n+(h + \zeta)k}.
\end{align*}
Since $c_1^{-1}e^{-(h + \zeta)n} \le \mu(A_n(x)) \le c_1e^{-(h - \zeta)n}$ for every 
$n$-cylinder in $G_N$, $\tilde R^n(k)$ can be covered by at most 
$c_1^3e^{-(h - \zeta- 2\delta)n+(h + \zeta)k +(h + \zeta)n }$ many $n$-cylinders. 
Since $ R^n(k)$ is contained in the $\beta$-neighbourhood of $\tilde R^n(k)$ (under the
Hamming metric $d^H_n$), and every $\beta$-neighbourhood of an $n$-cylinder contains
 at most $\lambda_n < e^{\delta n}$ many $n$-cylinders according to~\eqref{lambda.estimate},
 the total number of $n$-cylinders that intersects 
 $R^n(k)$ is bounded from above by 
$$
\lambda_n c_1^3e^{-(h - \zeta- 2\delta)n+(h + \zeta)k +(h + \zeta)n } \le c_1^3e^{(  2\zeta + 3\delta)n+(h + \zeta)k }.
$$
Therefore,
$$
\mu(R^n(k)) \le  c_1^3e^{(  2\zeta + 3\delta)n+(h + \zeta)k } c_1e^{-(h - \zeta)n}
\le  c_1^4e^{(-h+3\zeta + 3\delta)n + (h + \zeta)k}.
$$

Summing over $k$, we finally obtain
\begin{align*}
\mu(\tilde B_n) \le &\sum\limits_{k=1}^{\eta n} \mu(R^n(k))\\
\le & \sum\limits_{k=1}^{\eta n} c_1^4e^{(-h+3\zeta + 3\delta)n + (h + \zeta)k}\\
\le & c_2 e^{(-h+3\zeta + 3\delta)n + (h + \zeta)\eta n}\\
\le & c_2 e^{(-(1-\eta)h+4\zeta + 3\delta)n }.
\end{align*}

Since $B_n \subset \tilde B_n$ for all $n \ge N$, we have 
$$
\sum_n\mu(B_n) \le N+\sum_{n>N}\mu(\tilde B_n) \le N+  \sum_{n>N}c_2e^{(-(1-\eta)h+4\zeta + 3\delta)n }.
$$
We can choose $\delta < \frac{1-\eta}{8}$ and $\zeta < \frac{1-\eta}{8}h$, hence 
$$
-(1-\eta)h+4\zeta + 3\delta \le -\frac{1-\eta}{8}h < 0.
$$
 Therefore $\sum_n\mu(B_n) < \infty$. By the Borel-Contelli lemma, for almost every $x\in G_N$ we have $\liminf\limits_n \frac{\tau(\bna)}{n}\ge \eta$. Since $\eta<1$ is arbitrary, the lower bound~(i) of
 the theorem follows.
 
 In order to get the upper bound~(ii) for a map with specification let $K(\epsilon)$ be
 the separation time. Then there exists a point $z\in\bna$
 and an $m\le K$ so that $T^{n+m}\in\bna$. Hence $\tau(\bna)\le n+K$
 and consequently $\lim_{n\to\infty}\frac1n\tau(\bna)\le1$.
 \end{proof}

\section{$\alpha$-mixing system have Poisson distributed return times for
unions of cylinders}\label{alpha.poisson}

This section is on the return times to sets that are unions of cylinders, where the 
underlying partition $\mathcal{A}$ is allowed to be countably infinite.
Recall that $W_{A,m}(x)$ is the number of visits of the orbit 
$\left\{T(x),T^2(x),\dots ,T^{m}(x)\right\}$ to the set $A$, i.e. 
$$
W_{A,m}(x)=\sum\limits_{j=1}^{m}\chi_A(T^j(x)).
$$
We then have the following result. 

\begin{thm}\label{Poisson_cylinder} Let $\mu$ be $\alpha$-mixing w.r.t.\ a finite or countably infinite partition
$\mathcal{A}$ and let $A\in\sigma(\mathcal{A}^n)$. As before, let $\tau(A)$ be the period of 
$A$. For any $t>0$, let $m = \frac{t}{\mu(A)}$ and denote by $\nu_t$ the Poisson
measure on $\mathbb{N}_0$ with parameter $t$. 
Then there exists a constant $C_1$ so that for every set $E\subset\mathbb{N}_0$
\begin{eqnarray*}
|\mathbb{P}(W_{A, m}\in E)-\nu_t(E)| \hspace{-1cm}&&\\ &\le&C_1\min_{\tau(A)<\Delta<m}\left(\frac{\alpha(\Delta)}{\mu(A)}+\Delta\mu(A)+\mathbb{P}_A(\tau_A \le \Delta)\right
)(t+\log m).
\end{eqnarray*}

\end{thm}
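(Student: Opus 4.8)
The plan is to realise $W_{A,m}$ as a sum of weakly dependent Bernoulli variables and apply the Chen--Stein bound sketched at the beginning of this section. Put $X_i=\chi_A\circ T^i$ for $1\le i\le m$, so that $W_{A,m}=\sum_{i=1}^m X_i$ and, by $T$-invariance, $\mathbb{E}(W_{A,m})=m\mu(A)=t$. Fix a decoupling scale $\Delta$ with $\tau(A)<\Delta<m$, to be optimised at the end; we may assume $\Delta\ge n$, since otherwise the asserted right-hand side already exceeds $1$ and there is nothing to prove. Let $B_i$ be the dependency neighbourhood consisting of those indices $j$ for which the coordinate block $[j,j+n)$ of $X_j$ comes within distance $\Delta$ of that of $X_i$; thus $|B_i|\le 2(\Delta+n)+1\le 5\Delta$. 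The Chen--Stein inequality then gives, for every $E\subset\mathbb{N}_0$,
\[
|\mathbb{P}(W_{A,m}\in E)-\nu_t(E)|\le c_0\,(b_1+b_2+b_3),
\]
where $c_0=\mathcal{O}(t+\log m)$ absorbs the norm of the solution of the associated Stein equation together with the range of counts that must be controlled, and
\[
b_1=\sum_{i}\sum_{j\in B_i}\mu(A)^2,\qquad b_2=\sum_{i}\sum_{j\in B_i\setminus\{i\}}\mathbb{E}(X_iX_j),\qquad b_3=\sum_{i}\mathbb{E}\bigl|\mathbb{E}\bigl(X_i-\mu(A)\mid\sigma(X_j:j\notin B_i)\bigr)\bigr|.
\]
It remains to bound each of $b_1,b_2,b_3$ by $\mathcal{O}\!\bigl(t\,(\Delta\mu(A)+\mathbb{P}_A(\tau_A\le\Delta)+\alpha(\Delta)/\mu(A))\bigr)$ and then to take the minimum over $\Delta$.

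The first term is immediate: $b_1\le 5\Delta\,m\mu(A)^2=5\Delta\mu(A)\,t$. For the second, $T$-invariance gives $\mathbb{E}(X_iX_j)=\mu(A\cap T^{-|i-j|}A)$, which vanishes whenever $|i-j|<\tau(A)$, so $b_2\le 2m\sum_{l=\tau(A)}^{\Delta+n}\mu(A\cap T^{-l}A)=2t\,\mathbb{E}_A\bigl(W_{A,\Delta+n}\bigr)$. A renewal-type estimate — using that the first return map on $A$ preserves $\mu|_A$, so that successive returns to $A$ do not accumulate faster than the first-return time permits — bounds $\mathbb{E}_A(W_{A,\Delta+n})$ by a constant multiple of $\mathbb{P}_A(\tau_A\le\Delta)$ as long as the latter is, say, at most $1/2$ (and the whole inequality is trivial otherwise). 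Hence $b_2=\mathcal{O}\bigl(t\,\mathbb{P}_A(\tau_A\le\Delta)\bigr)$.

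The estimate of $b_3$ is the heart of the argument and the step I expect to be the main obstacle. Since $A\in\sigma(\mathcal{A}^n)$, the variable $X_i$ depends only on the coordinates in $[i,i+n)$, while $\mathcal{G}_i:=\sigma(X_j:j\notin B_i)$ is generated by coordinates at distance $\ge\Delta$ from that block, split between a ``far past'' sub-$\sigma$-algebra $\mathcal{F}_i^-$ and a ``far future'' one $\mathcal{F}_i^+$. Writing $\mathbb{E}|\mathbb{E}(X_i-\mu(A)\mid\mathcal{G}_i)|=\mathbb{E}[(X_i-\mu(A))\,g_i]$ with $g_i=\sgn\mathbb{E}(X_i-\mu(A)\mid\mathcal{G}_i)$, a three-valued $\mathcal{G}_i$-measurable function, reduces the task to decorrelating $X_i$ from a bounded number of events per index. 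One then peels off $\mathcal{F}_i^-$ and $\mathcal{F}_i^+$ one at a time by the $\alpha$-mixing inequality: for events $C^\pm\in\mathcal{F}_i^\pm$ one compares $\mathbb{E}(X_i\,\chi_{C^-}\chi_{C^+})$ with $\mu(C^-)\,\mathbb{E}(X_i\chi_{C^+})$ (gap $\ge\Delta$ on the past side) and then $\mathbb{E}(X_i\chi_{C^+})$ with $\mu(A)\,\mu(C^+)$ (gap $\ge\Delta$ on the future side), obtaining a per-index bound of order $\alpha(\Delta)$. Summing over $i$ gives $b_3=\mathcal{O}\bigl(m\,\alpha(\Delta)\bigr)=\mathcal{O}\bigl(t\,\alpha(\Delta)/\mu(A)\bigr)$. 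The delicate point — and the reason for the sign reduction — is that $\mathcal{A}$ is allowed to be countably infinite and $A$ a countable union of cylinders, so one cannot afford to expand $\mathcal{G}_i$ into its (possibly very numerous) atoms and sum the raw mixing bound over them; the decoupling has to be organised so that only $\mathcal{O}(1)$ two-block mixing inequalities are invoked for each $i$.

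Putting the three bounds together, $b_1+b_2+b_3=\mathcal{O}\!\bigl(t\,(\Delta\mu(A)+\mathbb{P}_A(\tau_A\le\Delta)+\alpha(\Delta)/\mu(A))\bigr)$; substituting into the Chen--Stein inequality and taking the minimum over $\tau(A)<\Delta<m$ yields the claimed estimate with a suitable constant $C_1$. All the real work lies in the $b_3$ step: $b_1$ is elementary and $b_2$ is a routine (if slightly fiddly) renewal estimate.
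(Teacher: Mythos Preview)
Your overall plan---Chen--Stein with a gap $\Delta$ and a decoupling of the far past and future by $\alpha$-mixing---is the right one, and your treatment of $b_1$ and $b_3$ is essentially what the paper does (the sign reduction for $b_3$ is the same device the paper uses when it introduces the sets $\mathcal{W}^\pm(a^-)$). The problem is the $b_2$ step.

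The Arratia--Goldstein--Gordon term $b_2$ genuinely produces
\[
b_2=2m\sum_{l=\tau(A)}^{\Delta+n}\mu(A\cap T^{-l}A)=2t\,\mathbb{E}_A(W_{A,\Delta+n}),
\]
an \emph{expectation}, and your ``renewal-type estimate'' claiming $\mathbb{E}_A(W_{A,\Delta})\le C\,\mathbb{P}_A(\tau_A\le\Delta)$ whenever $\mathbb{P}_A(\tau_A\le\Delta)\le\tfrac12$ is false. Invariance of $\mu_A$ under $T_A$ gives only $\mu_A(\tau_A^{(k)}\le\Delta)\le\mu_A(\tau_A\le\Delta)$ for each $k$, not a geometric decay; the events $T_A^{-j}\{\tau_A\le\Delta\}$ are equi-measurable but there is no reason for their intersection to shrink. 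A metastable Markov chain already defeats the claim: take $A=A_0\cup A_1$ with $\mu_A(A_0)=p\le\tfrac12$, where points in $A_0$ have $\tau_A=1$ and remain in $A_0$ for $\sim\Delta$ iterates of $T_A$ before leaking to $A_1$, while points in $A_1$ have $\tau_A>\Delta$. Then $\mathbb{P}_A(\tau_A\le\Delta)\approx p$ but $\mathbb{E}_A(W_{A,\Delta})\approx p\Delta$. Such a chain is $\alpha$-mixing (even $\phi$-mixing) and $A$ is a union of $1$-cylinders, so nothing in the hypotheses rules it out. With this $b_2$ you would only prove the theorem with $\mathbb{E}_A(W_{A,\Delta})$ in place of $\mathbb{P}_A(\tau_A\le\Delta)$, which is a strictly weaker statement.

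The paper does \emph{not} use the $b_1+b_2+b_3$ decomposition for exactly this reason. It works directly with the Stein identity
\[
\left|\mathbb{P}(W_{A,m}\in E)-\nu_t(E)\right|=\sum_{i=1}^{m}p_i\sum_{a}f(a+1)\,\epsilon_{a,i},
\qquad \epsilon_{a,i}=\bigl|\mathbb{P}(W_{A,m}=a)-\mathbb{P}(W^i_{A,m}=a\mid I_i=1)\bigr|,
\]
and compares the punctured sum $W^i_{A,m}$ with the gapped sum $\tilde{W}^i_{A,m}$. The key difference is that this comparison produces the \emph{event} $\{U_m^{i,\pm}>0\}\cap T^{-i}A$, whose probability is exactly $\mathbb{P}_A(\tau_A\le\Delta)\,\mu(A)$, rather than the sum $\sum_j\mathbb{E}(X_iX_j)$. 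In other words, the paper's $R_1$ term is $\mathbb{P}_A(\tau_A\le\Delta)$ on the nose, with no renewal argument needed. If you want to stay within the $b_1+b_2+b_3$ framework you would have to replace $b_2$ by the refined quantity $\sum_i\mathbb{P}(X_i=1,\,\exists j\in B_i\setminus\{i\}:X_j=1)$, which some versions of Chen--Stein allow; with the standard $b_2$ the bound you are aiming for is out of reach.
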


For similar result see~\cite{Abadi08,AV1,AV3} where the Poisson distribution for $\phi$-mixing
measures was shown for single cylinders centred at a generic point.
To prove this theorem we use the Chen-Stein method similar to~\cite{HP}
where it was laid out in more detail than we do here although we shall proceed 
to give a  summary of the procedure.

Let $\nu$ be a probability measure on $\mathbb{N}_0$ (equipped with the 
power $\sigma$-algebra $\mathscr{B}_{\mathbb{N}_0}$).
If we denote by $\mathcal{F}$ the set of all real-valued functions on  $\mathbb{N}_0$, then the Stein 
operator $\mathcal{S}:\mathcal{F}\rightarrow\mathcal{F}$ is defined by
\begin{equation}\label{steinoperator}
\mathcal{S}f(k)=tf(k+1)-kf(k),\quad\text{ }  \forall k\in \mathbb{N}_0.
\end{equation}
Denote by $\nu_t$ the Poisson-distribution measure with mean $t$, i.e.\
$\mathbb{P}_{\nu_t}(\{k\})=\frac{e^{-t}t^k}{k!}$  $\forall k\in \mathbb{N}_0$ then the Stein equation 
\begin{equation}\label{steineq}
\mathcal{S}f=h-\int_{\mathbb{N}_0}h\,d\nu_t
\end{equation}
has a solution $f$ for each 
$\nu_t$-integrable $h\in\mathcal{F}$ (see~\cite{BC1}). 
The solution $f$ is unique except for $f(0)$, which can be chosen arbitrarily\footnote{
 $f$ can be computed recursively from the Stein equation:
$$
f(k)=\frac{(k-1)!}{t^k}\sum_{i=0}^{k-1} \left( h(i)-\mu_0(h)\right)\frac{t^i}{i!}
=-\frac{(k-1)!}{t^k}\sum_{i=k}^{\infty} \left( h(i)-\mu_0(h)\right)\frac{t^i}{i!} , 
\quad\text{} \forall k\in\mathbb{N}.
$$}.
In particular,  if $h:\mathbb{N}_0\rightarrow\mathbb{R}$ is bounded then so is the associated Stein solution $f$.
A probability measure $\nu$ on $(\mathbb{N}_0,\mathscr{B}_{\mathbb{N}_0})$ is Poisson 
(with parameter $t$) if and only if~\cite{BC1}
$\int_{\mathbb{N}_0}\mathcal{S}f\,d\nu=0$ for all bounded functions
$f:\mathbb{N}_0\rightarrow\mathbb{R}$.
The total variation distance of a probability measure $\nu$  from
the  Poisson distribution $\nu_t$ can then be estimated as follows:
\begin{equation}\label{finalformstein}
|\nu(E)-\nu_t(E)|=\left | \int_{\mathbb{N}_0}\mathcal{S}f\,d\nu\right |
=\left | \int_{\mathbb{N}_0}\left (t f(k+1)-kf(k)\right)d\nu\right |
\end{equation}
where $E\subset\mathbb{N}_0$ and  $f$ is the Stein solution that corresponds to the 
indicator function $\chi_E$. 
 The following lemma on the function $f$ associated to characteristic functions was proven in~\cite{HP}.

\begin{lemma}\label{logsum}
For the Poisson distribution $\mu_0$, the Stein solution of the Stein equation ($\ref{steineq}$) that corresponds to the indicator function $h=\chi_E$, with $E\subset\mathbb{N}_0$, satisfies
\begin{equation}\label{estimatessteinsolution}
\left|f_{\chi_E}(k)\right|\le
\begin{cases}
    1  \quad&\text{ if } k \le t\\
     \frac{2+t}{k} \quad&\text{ if } k >t\;.
\end{cases}
\end{equation} 
In particular
\begin{eqnarray}
\sum\limits_{k=1}^{m}\left|f_{\chi_E}(k)\right|
&\le& \begin{cases} m \quad&\text{if } m\le t \\
t+(2+t)\log\frac{m}{t} \quad&\text{if } m>t\;.
\end{cases}
\end{eqnarray}

\end{lemma}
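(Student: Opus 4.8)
The plan is to argue directly from the closed form of the Stein solution recorded in the footnote above: for $h=\chi_E$ one has, for every $k\ge 1$,
$$
f_{\chi_E}(k)=\frac{(k-1)!}{t^k}\sum_{i=0}^{k-1}\bigl(\chi_E(i)-\nu_t(E)\bigr)\frac{t^i}{i!}
=-\frac{(k-1)!}{t^k}\sum_{i=k}^{\infty}\bigl(\chi_E(i)-\nu_t(E)\bigr)\frac{t^i}{i!},
$$
the two expressions coinciding because $\sum_{i\ge 0}\bigl(\chi_E(i)-\nu_t(E)\bigr)t^i/i!=e^t\nu_t(E)-e^t\nu_t(E)=0$. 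Since $\chi_E(i)\in\{0,1\}$ and $\nu_t(E)\in[0,1]$, the factor $\chi_E(i)-\nu_t(E)$ always has modulus $\le 1$, so the whole matter reduces to estimating partial sums of the Poisson weights $w_i:=t^i/i!$. The decisive elementary fact is that consecutive weights satisfy $w_{i+1}/w_i=t/(i+1)$, which is $\ge 1$ exactly when $i+1\le t$ and $<1$ once $i+1>t$; accordingly I use the forward representation when $k\le t$ and the backward one when $k>t$.

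For $k\le t$, bounding the forward sum gives $|f_{\chi_E}(k)|\le\frac{(k-1)!}{t^k}\sum_{i=0}^{k-1}w_i$; since $k-1\le t-1$ the weights $w_0,\dots,w_{k-1}$ are non-decreasing, so this sum is at most $k\,w_{k-1}=k\,t^{k-1}/(k-1)!$, whence $|f_{\chi_E}(k)|\le k/t\le 1$. For $k>t$, the backward representation gives $|f_{\chi_E}(k)|\le\frac{(k-1)!}{t^k}\sum_{i\ge k}w_i$, and now for $i\ge k>t$ one has $w_{i+1}/w_i=t/(i+1)\le t/(k+1)<1$, so the tail is dominated by a geometric series with ratio $t/(k+1)$:
$$
\sum_{i\ge k}w_i\le\frac{w_k}{1-\tfrac{t}{k+1}}=\frac{t^k}{k!}\cdot\frac{k+1}{k+1-t},
\qquad\text{hence}\qquad |f_{\chi_E}(k)|\le\frac1k\cdot\frac{k+1}{k+1-t}.
$$
Writing $k=t+s$ with $s>0$, a one-line check shows $(2+t)(s+1)-(t+s+1)=ts+s+1>0$, i.e.\ $\frac{k+1}{k+1-t}\le 2+t$, so $|f_{\chi_E}(k)|\le(2+t)/k$. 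This establishes \eqref{estimatessteinsolution}.

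The summed bound is then obtained by summing these pointwise estimates over $k$. For $m\le t$ it is immediate, since $|f_{\chi_E}(k)|\le 1$ for every $k\le m$. For $m>t$ I would split the range at $\lfloor t\rfloor$: the block $k\le t$ contributes at most $t$ (here one uses $|f_{\chi_E}(k)|\le k/t$, which by the same forward estimate in fact remains valid for $k\le t+1$, leaving a little extra room), and the block $t<k\le m$ contributes at most $(2+t)\sum_{t<k\le m}1/k$, which by the elementary $1/k\le\log\frac{k}{k-1}$ telescopes to at most $(2+t)\log(m/\lfloor t\rfloor)$, hence to $(2+t)\log(m/t)$ after absorbing the fixed surplus into the slack of the first block. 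I do not expect a conceptual obstacle anywhere: the statement is simply a careful packaging of the two-regime behaviour of the Poisson weights. The only mildly delicate points are extracting the clean constant $2+t$ out of the geometric tail in the case $k>t$, and the bookkeeping around the non-integer part of $t$ needed to land the logarithmic term with exactly the constants $t$ and $2+t$ in the summed estimate.
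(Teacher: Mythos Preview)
The paper does not prove this lemma; it simply cites \cite{HP}. So there is no in-paper argument to compare your approach against, and your proposal stands or falls on its own.

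Your derivation of the pointwise bound \eqref{estimatessteinsolution} is correct. Using the forward representation when $k\le t$ and the backward one when $k>t$, together with the monotonicity of the Poisson weights on the two sides of the mode, is exactly the right mechanism; the geometric-tail estimate and the algebraic verification that $\tfrac{k+1}{k+1-t}\le 2+t$ for $k>t$ are both clean and valid.

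The only soft spot is the summed estimate in the case $m>t$. Splitting at $\lfloor t\rfloor$ and bounding the harmonic tail by $\int_{\lfloor t\rfloor}^m dx/x$ yields $(2+t)\log(m/\lfloor t\rfloor)$, and the surplus $(2+t)\log(t/\lfloor t\rfloor)$ is not in general absorbed by the slack $t-\lfloor t\rfloor$ in the first block (try $t$ just above $2$, say). Your remark that the forward bound $|f(k)|\le k/t$ persists up to $k\le t+1$ is correct and helps, but you have not actually closed the gap to the \emph{exact} constants $t$ and $2+t$ claimed in the lemma. This is a bookkeeping issue rather than a conceptual one, and it is irrelevant for the applications in the paper, where only $\sum_{k\le m}|f(k)|=\mathcal{O}(t+\log m)$ is used. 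If you want the constants precisely as stated, you will need a slightly more careful treatment of the one or two terms near $k\approx t$; otherwise your argument already delivers the pointwise bound in full and the summed bound up to an inessential additive constant.
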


\subsection{Return times distribution}

\begin{proof}[Proof of Theorem~\ref{Poisson_cylinder} ]

The Poisson parameter $t$ is the expected value of $W_{A,m}$ 
which implies $t=\sum\limits_{i=1}^m\mu\left(\chi_AT^i\right)=m\mu(A)$,
where $\mu(T^{-i}A)=\mu(A)$ by invariance.
If $h=\chi_E$ with $E\subset\mathbb{N}_0$ an arbitrary subset of the positive integers,
then we obtain from~\eqref{finalformstein} and~\eqref{steinoperator}
$$
\left |\nu(\mathcal{S}f)\right |
=\left | \nu( h)-\nu_t(h)\right | =\left|\mathbb{P}(W_{A,m}\in E)-\nu_t(E)\right|
=\left|\mathbb{E}\left(t f(W_{A,m}+1)-W_{A,m}f(W_{A,m})\right)\right|.
$$
Hence we can proceed as follows:
\begin{align}\label{errornewrepresentation}
\left|\mathbb{P}(W_{A,m}\in E)-\nu_t(E)\right|
 &=\left|t\mathbb{E}f(W_{A,m}+1)-\mathbb{E}\left(\sum_{i=1}^{m}I_if(W_{A,m})\right) \right|\notag\\  
&=\left|\sum_{i=1}^{m}p_i\mathbb{E}f(W_{A,m}+1)-\sum_{i=1}^{m}p_i\mathbb{E}(f(W_{A,m})|I_i=1)\right|\notag\\
&=\sum_{i=1}^{m}p_i\left(\sum_{a=0}^{m}f(a+1)\mathbb{P}(W_{A,m}=a)-\sum_{a=0}^{m}f(a)\mathbb{P}(W_{A,m}=a|I_i=1)\right)\notag\\
&= \sum_{i=1}^{m}p_i\sum_{a=0}^{m}f(a+1)\epsilon_{a,i},
\end{align} 
where we put $I_i(x)=\chi_AT^i(x)$ for the characteristic function of the set $T^{-i}A$ and
\begin{equation}\label{errorterm1}
\epsilon_{a,i}=\left|\mathbb{P}(W_{A,m}=a)-\mathbb{P}(W_{A,m}=a+1|I_i=1)\right|.
\end{equation}
The function $f$ above is the solution of the Stein equation~$(\ref{steineq})$ that 
corresponds to the indicator function $h=\chi_E$ in the Stein method and has been 
bounded in Lemma~$\ref{logsum}$.

In order to estimate the error term $\epsilon_{a,i}$  put 
$W_{A,m}^i=W_{A,m}-\chi_A\circ T^i=\sum_{\substack{1\le j\le m \\ j\neq i}}\chi_A\circ T^j$
(punctured sum). Then 
$$
\epsilon_{a,i}
=\left|\mathbb{P}(W_{A,m}=a)-\frac{\mathbb{P}\left(\{W_{A,m}^i=a\}\cap T^{-i}A\right)}{\mu(A)}\right|
\le\left|\mathbb{P}(W_{A,m}=a)-\mathbb{P}(W_{A,m}^i=a)\right| +\frac{\xi_a}{\mu(A)} 
$$
where 
 $\xi_a=\max_i\left|\mathbb{P}(\{W_{A,m}^i=a\}\cap T^{-i}A)-\mathbb{P}(W_{A,m}^i=a) \mu(A)\right|$
 is zero if all $I_i$ are independent of each other.
The first term is estimated by
$$
\left|\mathbb{P}(W_{A,m}=a)-\mathbb{P}(W_{A,m}^i=a)\right|\le \mathbb{P}(I_i=1)=\mu(A).
$$
For the second term, which contains $\xi_a$, we proceed as follows.

Let $\Delta<\!\!<m$ be a positive integer (the halfwith of the gap) and put for every $i\in(0,m]$
\begin{align}W_{A,m}^{i, -}&=\sum\limits_{j=1}^{i-(\Delta+1)}\chi_A\circ T^j, &
W_{A,m}^{i, +}&=\sum\limits_{j=i+\Delta+1}^{m}\chi_A\circ T^j, \nonumber\\
 U_m^{i, -}&=\sum\limits_{j=i-\Delta}^{i-1}\chi_A\circ T^j, &
U_m^{i,+}&=\sum\limits_{j=i+1}^{i+\Delta}\chi_A\circ T^j, \nonumber
 \end{align}
with the obvious modifications if $i<\Delta$ or $i>m-\Delta$.
With these  partial sums we distinguish between the hits that occur near  the $i^{th}$ iteration, namely $U_m^{i,-}$ and  $U_m^{i,+}$, and the hits that occur away from the $i^{th}$ iteration, namely
$ W_{A,m}^{i, -}$ and $ W_{A,m}^{i, +}$. Let us put $\tilde{W}_{A,m}^{i}= W_{A,m}^i-U_m^{i} =W_{A,m}^{i, -} + W_{A,m}^{i, +}$
for the total sum minus the $2\Delta+1$ terms in the gap surrounding the coordinate $i$.
The gap allows us to use the mixing property in the terms $W_{A,m}^{i,\pm}$
 and its size will be determined later when we optimise the error term. 

Note that for $a\in\mathbb N_0$
\begin{eqnarray*}
\mathbb{P}(\{W_{A,m}=a+1\}\cap T^{-i}A)&=&\mathbb{P}(\{W_{A,m}^i=a\}\cap T^{-i}A) \\
&=&\sum_{\substack{\vec{a}=(a^-,a^{0,-},a^{0,+},a^+)\\ \text{s.t } |\vec{a}|=a}}
\mathbb{P}\big(\{W_{A,m}^{i, \pm}=a^\pm\}\cap \{U_m^{i,\pm}=a^{0,\pm}\}\cap T^{-i}A \big).                                     
\end{eqnarray*}
We split the following sum into three terms
$$
\sum_a|f(a+1)|\cdot\bigg| \mathbb{P}\left(\{W_{A,m}^i=a\}\cap T^{-i}A\right)-\mathbb{P}\left(W_{A,m}^i=a\right)\mu(A)\bigg|\le R_1+R_2+R_3
$$
and will estimate the three terms
\begin{eqnarray*}
R_1&=&\sum_a|f(a+1)|\cdot\left|\mathbb{P}\left(\{W_{A,m}^i=a\}\cap T^{-i}A\right)-\mathbb{P}\left(\{\tilde{W}_{A,m}^i=a\}\cap T^{-i}A\right)\right|\\
R_2&=&\sum_a|f(a+1)|\cdot\left|\mathbb{P}\left(\{\tilde{W}_{A,m}^i=a\}\cap T^{-i}A\right)-\mathbb{P}\left(\tilde{W}_{A,m}^i=a\right)\mathbb{P}\left(I_i=1\right)\right|\\
R_3&=&\sum_a|f(a+1)|\cdot\left|\mathbb{P}\left(\tilde{W}_{A,m}^i=a\right)-\mathbb{P}\left(W_{A,m}^i=a\right)\right|\mu(A)
\end{eqnarray*}
separately.

\vspace{3mm}

\noindent\textbf{Estimate of $R_1$: } Observe that
\begin{eqnarray*}
\{W_{A,m}^i=a\}\cap T^{-i}A
&\subset& \left(\{\tilde{W}_{A,m}^i=a\}\cap T^{-i}A\right)\cup\left(\{U_m^i>0\}\cap T^{-i}A\right)\\
\{\tilde{W}_{A,m}^i=a\}\cap T^{-i}A&\subset& \left(\{W_{A,m}^i=a\}\cap T^{-i}A\right)
\cup\left(\{U_m^i>0\}\cap T^{-i}A\right).
\end{eqnarray*}
Since $U_m^i=U_m^{i,+}+U_m^{i,-}>0$ implies that either $U_m^{i,+}>0$ or $U_m^{i,-}>0$ we get
$$
\big|\mathbb{P}\big(\{W_{A,m}^i=a\}\cap T^{-i}A\big)-\mathbb{P}\big(\{\tilde{W}_{A,m}^i=a\}\cap T^{-i}A\big)\big|
\le \mathbb{P}\big( \{U_m^i>0\}\cap T^{-i}A\big)\le b^-_i+b^+_i
$$
where
$$
b^-_i=\mathbb{P}\big( \{U_m^{i,-}>0\}\cap T^{-i}A\big)\quad\text{and}\quad
b^+_i=\mathbb{P}\big( \{U_m^{i,+}>0\}\cap T^{-i}A\big).
$$
 For $b^+_i$ we obtain the estimate
$$
 b_i^+=\mathbb{P}\big( \{U_m^{i,+}>0\}\cap T^{-i}A\big)
= \mathbb{P}(U_m^{i,+}>0|I_{i}=1)\mu(A)
=\mathbb{P}_A(\tau_A\le\Delta)\mu(A)
$$
and since in~\cite{HP} it was shown that  $b^-_i=b^+_i$ we obtain
$$
R_1 \le c_2\mathbb{P}_A(\tau_A\le\Delta)\mu(A)\sum_a|f(a+1)| 
\le c_3\mathbb{P}_A(\tau_A\le\Delta)\mu(A)(t+\log m)
$$
for some $c_3$ where we used Lemma~\ref{logsum} to estimate the sum over $a$.

\vspace{3mm}

\noindent\textbf{Estimate of $R_3$: } In order to show that short returns are negligible
note that 
\begin{eqnarray*}
\{W_{A,m}^i=a\}&\subset &\{\tilde{W}_{A,m}^i=a\}\cup\{U_m^i>0\}\\
\{\tilde{W}_{A,m}^i=a\}& \subset&\{W_{A,m}^i=a\}\cup\{U_m^i>0\}
\end{eqnarray*}
which yields
$$
\bigg| \mathbb{P}\left(\tilde{W}_{A,m}^i=a\right)-\mathbb{P}\left(W_{A,m}^i=a\right) \bigg|
\le \mathbb{P}\left(U_m^i>0 \right)
\le 2\mathbb{P}\left( \bigcup_{k=1}^{\Delta}\{I_{i+k}=1\} \right)
\le 2\Delta \mu(A),
$$
and therefore
$$
R_3\le 2\Delta \mu(A)^2\sum_a|f(a+1)|\le c_4\Delta \mu(A)^2(t+\log m).
$$

\vspace{3mm}

\noindent\textbf{Estimate of $R_2$: } This is the principal term and the speed of mixing now becomes
relevant.
Recall that $ \tilde{W}_{A,m}^{i}(x)=W_{A,m}^{i, -}(x) + W_{A,m}^{i, +}(x)$ and
we want to estimate 
\begin{eqnarray*}
R_2&\le&\sum_{a}|f(a+1)|\bigg|\mathbb{P}\left(\{\tilde{W}_{A,m}^i=a\}\cap T^{-i}A\right)
-\mathbb{P}\left(\tilde{W}_{A,m}^i=a\right)\mu(A)\bigg|\\
&\le&\sum_{a^-,a^+}|f(a^-+a^++1)|\left(\mathbb{P}\left(\{\tilde{W}_m^{i,\pm}=a^\pm\}\cap T^{-i}A\right)
-\mathbb{P}\left(\tilde{W}_m^{i,\pm}=a^\pm\right)\mu(A)\right)\epsilon_{a^-,a^+},
\end{eqnarray*}
where $\epsilon_{a^-,a^+}=\sgn\left(\mathbb{P}\left(\{\tilde{W}_{A,m}^i=a\}\cap T^{-i}A\right)
-\mathbb{P}\left(\tilde{W}_{A,m}^i=a\right)\mu(A)\right)$.
If we put 
$$
\mathcal{W}^+(a^-)=\bigcup_{a^+:\,\epsilon_{a^-,a^+}=+1}\{\tilde{W}^{i,+}_m=a^+\},\qquad
\mathcal{W}^-(a^-)=\bigcup_{a^+:\,\epsilon_{a^-,a^+}=-1}\{\tilde{W}^{i,+}_m=a^+\},
$$
both disjoint unions, then
\begin{eqnarray*}
R_2&\le&\sum_{a}|\varphi(a)|\bigg|\mathbb{P}\left(\{\tilde{W}_m^{i,-}=a^-\}\cap\mathcal{W}^+(a^-)
\cap T^{-i}A\right)
-\mathbb{P}\left(\tilde{W}_m^{i,-}=a^-\right)\mu\left(\mathcal{W}^+(a^-)\right)\mu(A)\bigg|\\
&&+\sum_{a}|\varphi(a)|\bigg|\mathbb{P}\left(\{\tilde{W}_m^{i,+}=a^+\}\cap\mathcal{W}^-(a^+)
\cap T^{-i}A\right)
-\mathbb{P}\left(\tilde{W}_m^{i,+}=a^+\right)\mu\left(\mathcal{W}^-(a^+)\right)\mu(A)\bigg|
\end{eqnarray*}
where $\varphi(a)=\sup_{a'>a}|f(a')|$ satisfies by Lemma~\ref{logsum} $\varphi(a)\le\min(1,\frac{t}a)$.
We gave to estimate the two mixing terms, the first of which is for $a^-\ge0$:
$$
\bigg|\mathbb{P}\left(\{\tilde{W}_m^{i,-}=a^-\}\cap\mathcal{W}^+(a^-)\cap T^{-i}A\right)
-\mathbb{P}\left(\tilde{W}_m^{i,-}=a^-\right)\mu\left(\mathcal{W}^+(a^-)\right)\mu(A)\bigg|
\le R_{2,1}+R_{2,2}+R_{2,3}
$$
where
\begin{eqnarray*}
R_{2,1}&=& \bigg|\mathbb{P}\left(\{\tilde{W}_m^{i,-}=a^-\}\cap\mathcal{W}^+(a^-)\cap T^{-i}A\right)
-\mathbb{P}\left(\{\tilde{W}_m^{i,-}=a^-\}\cap T^{-i}A\right)\mu\left(\mathcal{W}^+(a^-)\right)\bigg|\\
R_{2,2}&=&\bigg|\mathbb{P}\left(\{W_{A,m}^{i,-}=a^-\}\cap T^{-i}A\right)-
\mathbb{P}\left(W_{A,m}^{i,-}=a^-\right)\mu(A)\bigg|\mu\left(\mathcal{W}^+(a^-)\right)\\
R_{2,3}&=&\bigg|\mathbb{P}\left(W_{A,m}^{i,-}=a^-\right)\mu\left(\mathcal{W}^+(a^-)\right)-
\mathbb{P}\left(\{\tilde{W}_m^{i,-}=a^-\}\cap\mathcal{W}^+(a^-)\right)\bigg|\mu(A).
\end{eqnarray*}
We now bound the three terms separately: Due to the mixing property we get for the first term
the estimate
$$
R_{2,1}\le \alpha(\Delta).
$$
Similarly for the second term 
$$
R_{2,2}\le\alpha(\Delta)\mu\left(\mathcal{W}^+(a^-)\right),
$$
while the third term is estimated by
$$
R_{2,3}\le\alpha(2\Delta)\mu(A).
$$
Combining these estimates and considering that the second term in
the above estimate of $R_2$ is estimated in the same manner we obtain
$$
R_2\le c_4\alpha(\Delta)\sum_{a}\varphi(a)\le c_6\alpha(\Delta)(t+\log m)
$$
for some constant $c_4$.

\vspace{3mm}

\noindent Finally, putting together the error terms $R_1$, $R_2$ and $R_3$  yields
\begin{align*}
&\left|\mathbb{P}(W_{A,m}\in E)-\nu_t(E)\right|\\
&\le\sum_{i=1}^m p_i\left(\sum_{a=0}^m|f(a+1)|\mu(A)+c_7\left(\frac{\alpha(\Delta)}{\mu(A)}
+2\Delta\mu(A)+\mathbb{P}_A(\tau_A\le\Delta)\right)(t+\log m)\right)\\
&\le c_8\left(\mu(A)+\frac{\alpha(\Delta)}{\mu(A)}+\Delta\mu(A) +  \mathbb{P}_A(\tau_A\le\Delta)  \}\right)(t+\log m)
\end{align*}
for some $c_8$ independent of $A$.
\end{proof}

\section{Poisson distributed return times for Bowen  balls }\label{bowen.poisson}

In this section we will prove Theorem~\ref{t1}. Recall that
$$
\psi(\epsilon , \delta, x) = \frac{\mu(B(x,\epsilon + \delta) \setminus B(x, \epsilon - \delta))}{\mu(B(x,\epsilon))}
$$
is the proportion  of the measure of the annulus  $B(x, \epsilon  + \delta) \setminus B(x, \epsilon - \delta)$ to the ball $B(x, \epsilon )$. 
Put $\tau^k_A(x)=\tau_A\circ T^{\tau_A^{k-1}}$ for the $k$th return of $x$ to the set $A$:
$$
\tau^k_A(x) = \min\{k > \tau^{k-1}_A(x): T^k(x) \in A\}
$$
where $\tau^1_A = \tau_A$.

We will prove the following more general theorem and then deduce Theorem~\ref{t1} and Theorem~\ref{t1'}. 

\begin{thm}\label{t5}
Let $\mu$ be a $\phi$-mixing $T$-invariant ergodic measure with positive entropy. Let $\gamma_n = \diam(\mathcal{A}^n)$. Assume that there exist $\epsilon_0>0$ and an increasing sequence $\{N(n)\}_{n=1}^\infty$ satisfying $n < N(n) < \frac{1}{4}\mu(\bna)^{-1}$ such that 
\begin{equation}\label{t5e}
\psi(\epsilon, \gamma_{N(n) -k}, T^kx) \le \vartheta_n(\epsilon)\cdot \frac{\mu(\bna)}{n}
\end{equation}
for all $\epsilon < \epsilon_0, x\in X, 0 \le k < n$, where $\vartheta_n(\epsilon) \to 0$ as $n \to \infty$
($\forall\;\epsilon < \epsilon_0$).

Then for all $t>0$ one has
$$
\lim\limits_{n \to \infty}\mathbb{P}\left(W_{\bna, m} = k\right) =e^{-t} \frac{t^k}{k!},
$$
where  $m = \frac{t}{\mu(\bna)}$.
\end{thm}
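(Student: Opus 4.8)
The plan is to trap the Bowen ball $\bna$ between unions of $N(n)$-cylinders and then invoke Theorem~\ref{Poisson_cylinder}, with Theorem~\ref{period} supplying the control on short returns. First I would build the outer approximation
$$
\bnb=\bigcap_{k=0}^{n-1}T^{-k}A_{N(n)-k}\!\big(B(T^kx,\epsilon)\big),
$$
which lies in $\sigma(\mathcal{A}^{N(n)})$, hence is a finite union of $N(n)$-cylinders, and which contains $\bna$. If $y\in\bnb\setminus\bna$ then for some $k<n$ one has $d(T^kx,T^ky)\ge\epsilon$ while $T^ky$ sits in an $\mathcal{A}^{N(n)-k}$-cylinder meeting $B(T^kx,\epsilon)$, whence $d(T^kx,T^ky)<\epsilon+\gamma_{N(n)-k}$; so $T^ky$ belongs to the annulus $B(T^kx,\epsilon+\gamma_{N(n)-k})\setminus B(T^kx,\epsilon-\gamma_{N(n)-k})$. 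By $T$-invariance and the regularity hypothesis~\eqref{t5e},
$$
\mu(\bnb\setminus\bna)\le\sum_{k=0}^{n-1}\psi\big(\epsilon,\gamma_{N(n)-k},T^kx\big)\,\mu\big(B(T^kx,\epsilon)\big)\le\vartheta_n(\epsilon)\,\mu(\bna),
$$
so in particular $\mu(\bnb)=(1+o(1))\mu(\bna)$ as $n\to\infty$.

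Second, since $\bna\subset\bnb$ the non-negative integer random variable $W_{\bnb,m}-W_{\bna,m}=\sum_{j=1}^m\chi_{\bnb\setminus\bna}\circ T^j$ has expectation $m\,\mu(\bnb\setminus\bna)\le t\,\vartheta_n(\epsilon)\to0$, so Markov's inequality gives $\mathbb{P}(W_{\bna,m}\ne W_{\bnb,m})\to0$ and it is enough to prove $\mathbb{P}(W_{\bnb,m}=k)\to e^{-t}t^k/k!$. I would apply Theorem~\ref{Poisson_cylinder} to $A=\bnb$ (a $\phi$-mixing measure is $\alpha$-mixing with $\alpha=\phi$), with Poisson parameter $t_n=m\,\mu(\bnb)\in[\,t,(1+\vartheta_n(\epsilon))t\,]\to t$, so that $\nu_{t_n}(\{k\})\to\nu_t(\{k\})$; it then remains, for each large $n$, to produce $\Delta=\Delta_n\in(\tau(\bnb),m)$ such that
$$
\Big(\frac{\phi(\Delta_n)}{\mu(\bnb)}+\Delta_n\mu(\bnb)+\mathbb{P}_{\bnb}(\tau_{\bnb}\le\Delta_n)\Big)\big(t_n+\log m\big)\longrightarrow0 .
$$
Since $\log m=|\log t|+|\log\mu(\bnb)|$ grows only linearly in $n$ by Brin--Katok, it suffices to make the bracket $o(1/n)$.

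The period bound enters in the choice of $\Delta_n$. Because $\bnb\subset B_{\epsilon+\gamma_{N(n)-n},n}(x)\subset B_{2\epsilon,n}(x)$ for $n$ large, Theorem~\ref{period}(i) gives, for $\mu$-a.e.\ $x$, all small $\epsilon$ and all large $n$, $\tau(\bnb)\ge\tau\big(B_{2\epsilon,n}(x)\big)>\eta n$ for any prescribed $\eta<1$; on the other hand $\tau(\bnb)$ is of much smaller order than $m\asymp e^{hn}$ (it does not exceed $N(n)$ up to a subexponential correction, since $\mu(\bnb\cap T^{-j}\bnb)\ge\mu(\bnb)^2-\phi(j-N(n))\mu(\bnb)$ is positive once $\phi(j-N(n))<\mu(\bnb)$), so $(\tau(\bnb),m)$ is non-empty and one may place $\Delta_n$ well inside it, growing like a small power of $\mu(\bnb)^{-1}$. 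Then $\Delta_n\mu(\bnb)\to0$ automatically and $\phi(\Delta_n)/\mu(\bnb)\to0$ provided $\phi$ decays fast enough against $e^{-hn}$ — which is exactly why the two corollaries impose a polynomial rate on $\phi$.

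The main obstacle is the short-return probability $\mathbb{P}_{\bnb}(\tau_{\bnb}\le\Delta_n)=\mu(\bnb)^{-1}\,\mu\big(\bigcup_{j\le\Delta_n}(\bnb\cap T^{-j}\bnb)\big)$, which I would handle by splitting the range of $j$ three ways. Returns with $j\le\eta n$ are excluded outright by the period estimate above, so contribute nothing. For $\eta n<j\le N(n)$ the two $N(n)$-cylinder structures overlap coordinate-wise; here I would pass to the Hamming cluster $\cn$ of Lemma~\ref{cluster of cylinders} (so that $\bnb\subset\cn$ is covered by at most $e^{\delta n}$ cylinders of $\mathcal{A}^n$), reduce $\bnb\cap T^{-j}\bnb$ to near-periodic configurations, and use the mixing coefficient across the residual gap $2j-n>(2\eta-1)n$, which is large enough that the associated $\phi$-sum is a tail and vanishes — this is the step where the near-cylinder geometry of Bowen balls and Lemma~\ref{cluster of cylinders} are genuinely used. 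For $N(n)<j\le\Delta_n$ the two copies are coordinate-disjoint and $\phi$-mixing gives $\mu(\bnb\cap T^{-j}\bnb)\le\mu(\bnb)^2+\phi(j-N(n))\mu(\bnb)$, contributing $\Delta_n\mu(\bnb)+\sum_{\ell\le\Delta_n-N(n)}\phi(\ell)$ after normalisation, which must be kept $o(1/n)$ by choosing $N(n)$, $\Delta_n$ and the $\phi$-rate compatibly with $N(n)<\tfrac14\mu(\bna)^{-1}$. Carrying out this three-scale bookkeeping is where the real effort lies; once the bracket is shown to be $o(1/n)$ the Poisson limit for $W_{\bnb,m}$, hence for $W_{\bna,m}$, follows, and Theorems~\ref{t1} and~\ref{t1'} are recovered by verifying that their respective assumptions on $\diam(\mathcal{A}^n)$ and on $\psi$ permit a choice of $\{N(n)\}$ with $n<N(n)<\tfrac14\mu(\bna)^{-1}$ fulfilling~\eqref{t5e}.
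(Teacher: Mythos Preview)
Your overall architecture---trap $\bna$ by a union of $N(n)$-cylinders, transfer the counting problem via a coupling estimate, then invoke a Poisson approximation for cylinder unions together with the period bound of Theorem~\ref{period}---is exactly the paper's. Two cosmetic differences: you take the \emph{outer} cylinder hull while the paper takes the inner one (both give $\mu(\bnb\triangle\bna)\le\vartheta_n\mu(\bna)$ by the same annulus computation), and you quote Theorem~\ref{Poisson_cylinder} of this paper whereas the paper actually imports Theorem~1 of~\cite{HP}, whose $\phi$-mixing error term has the shape
\[
\sum_{j=\tau(\tb)}^{\Delta}\delta_{\tb}(j),\qquad
\delta_{\tb}(j)=\min_{1\le\omega\le j\wedge N(n)}\big\{\mu(A_\omega(\tb))+\phi(j-\omega)\big\}.
\]
This structural difference is not cosmetic; it is precisely where your sketch breaks.

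The gap is in your third range $N(n)<j\le\Delta_n$. Applying $\phi$-mixing directly to $\bnb\in\sigma(\mathcal{A}^{N(n)})$ gives the bound $\mu(\bnb\cap T^{-j}\bnb)\le\mu(\bnb)^2+\phi(j-N(n))\,\mu(\bnb)$, and after normalising and summing you obtain, as you write, $\Delta_n\mu(\bnb)+\sum_{\ell=1}^{\Delta_n-N(n)}\phi(\ell)$. But $\sum_{\ell\ge1}\phi(\ell)$ is a fixed positive constant (it contains $\phi(1)$), so this quantity is \emph{never} $o(1/n)$, regardless of how $N(n)$, $\Delta_n$, and the polynomial rate of $\phi$ are tuned; multiplied by $t+\log m\asymp n$ the error does not even go to zero. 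The escape route is to sacrifice one copy of $\bnb$ for a coarser hull before mixing: for any $\omega<j$,
\[
\mu(\bnb\cap T^{-j}\bnb)\le\mu\big(A_\omega(\bnb)\cap T^{-j}\bnb\big)\le\big(\mu(A_\omega(\bnb))+\phi(j-\omega)\big)\mu(\bnb),
\]
so that $\mathbb{P}_{\bnb}(\tau_{\bnb}\le\Delta)\le\sum_{j}\min_{\omega}\{\mu(A_\omega(\bnb))+\phi(j-\omega)\}$, which is precisely the \cite{HP} error. With $\omega=N(n)/2$ in the third range the $\phi$-sum becomes a \emph{tail} $\sum_{j>N(n)}\phi(j-N(n)/2)=\mathcal{O}(N(n)^{-(1+\kappa)})=o(1/n)$, and $\mu(A_{N(n)/2}(\bnb))=\mathcal{O}(\mu(\bnb))$ by Lemma~\ref{l5}.

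Your middle range $\eta n<j\le N(n)$ has the same issue in disguise: the phrase ``mixing across the residual gap $2j-n$'' does not correspond to any legal choice of $\omega$, and covering $\bnb$ only by the fixed $n$-cluster $\cn$ would leave you summing $N(n)$ terms of size $\mu(\cn)\ge\mu(\bnb)$, which is again not $o(1/n)$. The paper instead takes $\omega=j/2$ and uses the $j$-dependent cluster bound $\mu(A_{j/2}(\bnb))\le\mu(\mathcal{C}_{\beta,j/2}(x))\le\nu^{j/4}$ from Lemma~\ref{cluster of cylinders} (cf.\ estimate~\eqref{cluster.estimate}), so that the sum over $j$ is geometric. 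Once you insert this $A_\omega$-enlargement step, your route through Theorem~\ref{Poisson_cylinder} becomes equivalent to the paper's route through~\cite{HP}; without it the short-return term cannot be made small.
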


The idea of the proof is to use cluster of cylinders sets to approximate Bowen balls. For this purpose, for some integer $N(n) \gg n$, define 
$$
\bnb =  \bigcup\limits_{A \in \mathcal{A}^{N(n)}, A \subset \bna } A
$$
the union of all $N(n)$-cylinders contained in $\bna$. If we put 
$$
\bnc =  \bigcup\limits_{A \in \mathcal{A}^{N(n)}, A \cap \partial\bna \ne \emptyset } A
$$
as the union of all cylinders which intersect the boundary of $\bna$, then 
$$
\bna \setminus \bnb \subset \bnc .
$$

The next lemma (c.f.~\cite{HY}) allows us to estimate the difference between $\bnb$ and $\bna$.

\begin{lemma}\label{l5} Under the hypothesis of Theorem~\ref{t5} we have
$$\mu(\bnc) \le  \vartheta_n(\epsilon) \mu(\bna)$$
and in particular,  $\mu(\bna)/\mu(\bnb) = \mathcal{O}(1)$.
\end{lemma}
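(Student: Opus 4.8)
The plan is to bound $\mu(\bnc)$ by covering the boundary-intersecting cylinders with a thin annulus around the Bowen ball, and then to apply the regularity hypothesis~\eqref{t5e} coordinate by coordinate along the orbit segment $x, Tx, \dots, T^{n-1}x$. First I would observe that if an $N(n)$-cylinder $A \in \mathcal{A}^{N(n)}$ meets $\partial\bna$, then for each $0 \le k < n$ the $k$-th iterate $T^k A$ is contained in an element of $\mathcal{A}^{N(n)-k}$, hence has diameter at most $\gamma_{N(n)-k}$; since some point of $A$ lies on $\partial\bna$, i.e.\ has $d(T^k x, T^k y) = \epsilon$ for some coordinate $k$ witnessing the boundary, every point $y' \in A$ satisfies $\epsilon - \gamma_{N(n)-k} < d(T^k x, T^k y') < \epsilon + \gamma_{N(n)-k}$ at that coordinate. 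Therefore
$$
\bnc \subset \bigcup_{k=0}^{n-1} T^{-k}\big( B(T^k x, \epsilon + \gamma_{N(n)-k}) \setminus B(T^k x, \epsilon - \gamma_{N(n)-k})\big).
$$

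Next I would estimate the measure of the right-hand side by subadditivity and invariance of $\mu$: each term $T^{-k}(\text{annulus at }T^k x)$ has measure equal to $\mu\big(B(T^k x, \epsilon + \gamma_{N(n)-k}) \setminus B(T^k x, \epsilon - \gamma_{N(n)-k})\big)$, which by the definition of $\psi$ equals $\psi(\epsilon, \gamma_{N(n)-k}, T^k x)\,\mu(B(T^k x, \epsilon))$. Here one uses the elementary bound $\mu(B(T^k x, \epsilon)) \le \mu(\bna)$ — in fact $\bna \subset T^{-k} B(T^k x, \epsilon)$ so $\mu(\bna) \le \mu(B(T^k x,\epsilon))$ — wait, the inequality needed goes the other way, so instead one should note that $T^{-k} B(T^k x, \epsilon) \supset \bna$ gives only a lower bound; the correct route is to absorb $\mu(B(T^k x, \epsilon))$ directly, since hypothesis~\eqref{t5e} already reads $\psi(\epsilon, \gamma_{N(n)-k}, T^k x) \le \vartheta_n(\epsilon)\,\mu(\bna)/n$, and the quantity $\mu(B(T^kx,\epsilon))$ appearing multiplicatively is itself of size comparable to $\mu(\bna)$ only up to the very factor we are trying to control — so the clean statement must be that the annulus measure $\psi \cdot \mu(B(T^kx,\epsilon))$ is bounded using $\mu(B(T^kx,\epsilon)) \le C$ for a trivial constant and the smallness coming from $\psi$; more honestly, the intended reading of~\eqref{t5e} is that $\mu\big(B(T^kx,\epsilon+\gamma_{N(n)-k})\setminus B(T^kx,\epsilon-\gamma_{N(n)-k})\big) \le \vartheta_n(\epsilon)\mu(\bna)/n$ outright. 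Summing over the $n$ coordinates $k = 0, \dots, n-1$ then gives
$$
\mu(\bnc) \le \sum_{k=0}^{n-1} \psi(\epsilon, \gamma_{N(n)-k}, T^k x)\,\mu(B(T^kx,\epsilon)) \le n \cdot \vartheta_n(\epsilon)\,\frac{\mu(\bna)}{n} = \vartheta_n(\epsilon)\,\mu(\bna),
$$
which is the first claim.

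For the second claim, since $\bnb \subset \bna$ and $\bna \setminus \bnb \subset \bnc$, we get $\mu(\bnb) \ge \mu(\bna) - \mu(\bnc) \ge (1 - \vartheta_n(\epsilon))\mu(\bna)$. Because $\vartheta_n(\epsilon) \to 0$, for $n$ large enough $\vartheta_n(\epsilon) \le \tfrac12$, whence $\mu(\bna)/\mu(\bnb) \le (1-\vartheta_n(\epsilon))^{-1} \le 2$, i.e.\ $\mu(\bna)/\mu(\bnb) = \mathcal{O}(1)$. The main obstacle, and the point deserving care, is the very first step: verifying that a cylinder of $\mathcal{A}^{N(n)}$ meeting $\partial\bna$ is genuinely trapped inside the union of annuli at scale $\gamma_{N(n)-k}$ at \emph{some} coordinate $k < n$ — one must argue that the supremum defining $\bna$ is attained (or approached) at a coordinate in $\{0,\dots,n-1\}$ and that the diameter control $\diam(T^k A) \le \gamma_{N(n)-k}$ holds simultaneously for all those $k$, so that a single cylinder is covered by the annulus at whichever coordinate realizes the boundary condition. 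Everything else is subadditivity, invariance, and the hypothesis.
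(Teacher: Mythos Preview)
Your argument is correct and matches the paper's proof: the covering $\bnc\subset\bigcup_{k=0}^{n-1}T^{-k}\bigl(B(T^kx,\epsilon+\gamma_{N(n)-k})\setminus B(T^kx,\epsilon-\gamma_{N(n)-k})\bigr)$ followed by subadditivity, invariance, and hypothesis~\eqref{t5e} is exactly what the paper does, and the second claim is deduced in the same way. Your hesitation over the factor $\mu(B(T^kx,\epsilon))$ is resolved in the paper precisely by the trivial bound $\mu(B(T^kx,\epsilon))\le 1$ (so that $\psi\cdot\mu(B(T^kx,\epsilon))\le\psi\le\vartheta_n(\epsilon)\mu(\bna)/n$), and the topological step at the start is phrased there as $\partial\bna\subset\bigcup_{k=0}^{n-1}T^{-k}\partial B(T^kx,\epsilon)$, which follows from $\bna=\bigcap_{k}T^{-k}B(T^kx,\epsilon)$ and continuity of $T$ without any need to argue that a supremum is attained.
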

\begin{proof}
Since $T$ is continuous,  $\partial \bna \subset \bigcup\limits_{k=0}^{n-1}T^{-k}\partial B(T^kx, \epsilon)$. Hence if $A_{N(n)} \cap \partial\bna \ne \emptyset$ for some $N(n)$-cylinder $A_{N(n)}$,
 then $A_{N(n)-k}(T^ky) \cap \partial B(T^kx,\epsilon) \ne \emptyset$ for some $0 \le k \le n-1$
 and  $y \in A^{N(n)}$. Since $\mbox{diam}(A_{N(n)-k}(T^ky)) \le \gamma_{N(n)-k}$ we obtain
\begin{align*}
\bnc \subset & \bigcup\limits_{k=0}^{n-1} 
T^{-k}(B(\partial B(T^kx, \epsilon), \gamma_{N(n)-n}))\\
\subset& \bigcup_k 
T^{-k}(B(T^kx, \epsilon+ \gamma_{N(n)-k}) \setminus B(T^kx, \epsilon + \gamma_{N(n)-k})),
\end{align*}
and consequently
\begin{align*}
\mu(\bnc) \le& n \cdot\sup_{0 \le k \le n-1} 
\mu(B(T^kx, \epsilon+ \gamma_{N(n)-k}) \setminus B(T^kx, \epsilon + \gamma_{N(n)-k}))\\
=& n \cdot \sup_{0 \le k \le n-1} \{\psi(\epsilon, \gamma_{N(n)-k}, T^kx)\cdot \mu(B(T^kx, \epsilon))\}\\
\le &  n \cdot \sup_{0 \le k \le n-1} \{\psi(\epsilon, \gamma_{N(n)-k}, T^kx)\}\\
\le&  \vartheta_n(\epsilon) \mu(\bna).
\end{align*}
In particular  $\mu(\bnc)/\mu(\bnb) = \vartheta_n \to 0$ and therefore 
$\mu(\bna)/\mu(\bnb) = \mathcal{O}(1)$.
\end{proof}

Next we show that the limiting distribution for the hitting times of $\bna$ can be approximated by 
the distribution of $\bnb$. To simplify notation we write $B = \bna$ and $\tb = \bnb$. For $t > 0$ 
we put $m = \frac{t}{\mu(B)}$ and $\tm = \frac{t}{\mu(\tb)}$ and write 
for simplicity's sake 
$$
\Theta_{B,m}(k)=\mathbb{P}\Big(W_{B,m} =k \Big),\quad 
\Theta_{\tb,\tm} (k)= \mathbb{P}\left( W_{\tb,\tm} =k \right)
$$
and others similarly. The following approximation lemma 
does note depend on the mixing property. 

\begin{lemma}\label{l13}  For all $t \ge 0$ we have
$$
 \left| \Theta_{B,m}(k)  - \Theta_{\tb,\tm} (k)\right| \le 2t \cdot \vartheta_n(\epsilon) \to 0
$$
as $n \rightarrow \infty$.
\end{lemma}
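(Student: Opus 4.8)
The plan is to compare the two hitting-time distributions by coupling the orbit of a $\mu$-typical point $x$ through the two windows $[1,m]$ and $[1,\tm]$, using that $B\setminus\tb\subset\bnc$ has relatively small measure by Lemma~\ref{l5}. First I would observe that $\tb\subset B$, so that $W_{\tb,m}(x)\le W_{B,m}(x)$ pointwise; moreover $\tm\ge m$ since $\mu(\tb)\le\mu(B)$, so lengthening the window from $m$ to $\tm$ can only increase the count. The two sources of discrepancy are therefore (a) the orbit points that land in $B\setminus\tb$ during $[1,m]$, and (b) the extra iterates in the window $(m,\tm]$. I would bound the expected number of each by Markov's inequality together with invariance of $\mu$.

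Concretely, write $\Delta W=W_{B,m}-W_{\tb,\tm}$; then for any $k$,
$$
\big|\Theta_{B,m}(k)-\Theta_{\tb,\tm}(k)\big|=\big|\mathbb{P}(W_{B,m}=k)-\mathbb{P}(W_{\tb,\tm}=k)\big|\le\mathbb{P}(W_{B,m}\neq W_{\tb,\tm}).
$$
Since $W_{\tb,m}\le W_{B,m}$ and $W_{\tb,m}\le W_{\tb,\tm}$, on the event $\{W_{B,m}\neq W_{\tb,\tm}\}$ at least one of the two discrepancy sources above is nonzero, so by the union bound
$$
\mathbb{P}(W_{B,m}\neq W_{\tb,\tm})\le\mathbb{P}\Big(\sum_{j=1}^{m}\chi_{B\setminus\tb}(T^jx)\ge1\Big)+\mathbb{P}\Big(\sum_{j=m+1}^{\tm}\chi_{B}(T^jx)\ge1\Big).
$$
By Markov's inequality and $T$-invariance the first term is at most $m\,\mu(B\setminus\tb)\le m\,\mu(\bnc)$, which by Lemma~\ref{l5} is bounded by $m\,\vartheta_n(\epsilon)\mu(B)=t\,\vartheta_n(\epsilon)$; the second term is at most $(\tm-m)\mu(B)=t-m\mu(B)=t\big(1-\mu(\tb)/\mu(B)\big)\cdot\frac{\mu(B)}{\mu(\tb)}$, wait—more cleanly, $(\tm-m)\mu(B)=t\frac{\mu(B)}{\mu(\tb)}-t=t\frac{\mu(B)-\mu(\tb)}{\mu(\tb)}=t\frac{\mu(\bnc)}{\mu(\tb)}\le t\,\vartheta_n(\epsilon)\frac{\mu(B)}{\mu(\tb)}$, which is $t\,\vartheta_n(\epsilon)\cdot\mathcal{O}(1)$ again by Lemma~\ref{l5}. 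Summing the two contributions gives a bound of the form $2t\,\vartheta_n(\epsilon)\cdot\mathcal{O}(1)$, and since $\vartheta_n(\epsilon)\to0$ the right-hand side tends to $0$; absorbing constants into $\vartheta_n$ (or noting $\mu(B)/\mu(\tb)\to1$) yields exactly the stated bound $2t\,\vartheta_n(\epsilon)$.

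The only delicate point is the bookkeeping that makes $\{W_{B,m}\neq W_{\tb,\tm}\}$ genuinely contained in the union of the two events above: one must check that if no orbit point falls in $B\setminus\tb$ within $[1,m]$ and no orbit point falls in $B$ within $(m,\tm]$, then $W_{B,m}(x)=W_{\tb,m}(x)=W_{\tb,\tm}(x)$. The first equality holds because every $T^jx\in B$ with $j\le m$ must then lie in $\tb$; the second because the extra window $(m,\tm]$ contributes no hits of $B$ and hence none of $\tb\subset B$. With that observation in place the rest is the elementary Markov estimate above, so I do not expect any real obstacle beyond this combinatorial verification.
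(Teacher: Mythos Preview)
Your argument is correct and is essentially the paper's proof: the paper also splits via the intermediate quantity $\Theta_{\tb,m}(k)$ into a term controlled by visits to $B\setminus\tb$ on $[1,m]$ and a term controlled by the extra window $(m,\tm]$, and bounds each by $m\,\mu(B\setminus\tb)$ using invariance. The only cosmetic difference is that in your second event you track hits of $B$ rather than of $\tb$; if you replace $\chi_B$ by $\chi_{\tb}$ there you get $(\tm-m)\mu(\tb)=m\,\mu(B\setminus\tb)\le t\,\vartheta_n(\epsilon)$ on the nose, which removes the stray $\mu(B)/\mu(\tb)$ factor and yields the exact constant $2t\,\vartheta_n(\epsilon)$ without any absorbing.
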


\begin{proof} By the triangle inequality
\begin{eqnarray*}
\left| \Theta_{B,m}(k) - \Theta_{\tb,\tm} (k) \right|
&\le & \left| \Theta_{B,m}(k)   -\Theta_{\tb,m}(k)\right|
+\left|\Theta_{\tb,m}(k) -  \Theta_{\tb,\tm} (k) \right|\\
&=& I + II.
\end{eqnarray*}
In order to estimate the first term note that $\tb \subset B$ which implies $W_{B,m} \ge W_{\tb,m}$. Consequently
 $$
 I \le \mathbb{P}(W_{B \setminus \tilde B, m}>0)
 \le \mathbb{P}(\tau_{B\setminus \tilde B} < m)
 \le m\mu(B\setminus \tilde B).
 $$
For the second term we proceed as follows:
$$
II = \mathbb{P}(\{W_{\tilde B, m} = k\} \cap \{W_{\tilde B, \tm} >  k\} ) 
 \le \mu(\tilde B) (\tm-m)
= m\mu(B \setminus \tilde B).
$$

Combining the estimates for $I$ and $II$ yields by Lemma~\ref{l5}
\begin{eqnarray*}
\left|  \Theta_{B,m}(k)   -\Theta_{\tb,\tm} (k) \right|
& \le &2m \cdot \mu(B \setminus \tilde B)\\
&\le & 2m \cdot \mu(\bnc)\\
& \le & 2m \cdot \vartheta_n(\epsilon) \mu(B)\\
 &= &2t \vartheta_n(\epsilon) \to 0.
\end{eqnarray*}
\end{proof}

Before we prove Theorem~\ref{t5} let us consider the case of $\alpha$-mixing measures.
As noted in~\cite{HY} generalised SRB measures for systems that allow a Young tower
construction as in~\cite{Y2,Y3} are $\alpha$-mixing and thus are prime examples to which
the following proposition can be applied.
We though have to make an assumption on the short retun times.

\begin{proposition}\label{t5alpha}
Let $\mu$ be an $\alpha$-mixing measure where $\alpha(k)$ is decreases exponentially
fast to $0$. Let $\Delta=a\left|\log\mu(\bna)\right|$ where $a>0$ is so that 
$\frac{\alpha(\Delta)}{\mu(\bna)}\Delta\to0$ as $n\to\infty$. 
If $\mathbb{P}_{\bna}(\tau_{\bna}\le\Delta)\Delta\to0$ then
$$
\lim\limits_{n \to \infty}\mathbb{P}\left(W_{\bna, m} = k\right) =e^{-t} \frac{t^k}{k!}
$$
\end{proposition}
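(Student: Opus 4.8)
The plan is to deduce Proposition~\ref{t5alpha} from Theorem~\ref{Poisson_cylinder} applied to the return set $A = \bnb$, the union of $N(n)$-cylinders contained in $\bna$, and then transfer the conclusion back to $\bna$ via the approximation Lemma~\ref{l13}. Since the statement only asserts convergence in distribution (not a rate), it suffices to show the right-hand side of the Chen-Stein bound tends to $0$ with the choice $\Delta = a|\log\mu(\bna)|$. First I would fix $t>0$, set $B=\bna$, $\tb=\bnb$, $m = t/\mu(B)$, $\tm = t/\mu(\tb)$, and note that by Lemma~\ref{l13} it is enough to prove $\Theta_{\tb,\tm}(k)\to e^{-t}t^k/k!$. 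Applying Theorem~\ref{Poisson_cylinder} to $A=\tb\in\sigma(\mathcal{A}^{N(n)})$ with parameter $t$ gives
$$
|\Theta_{\tb,\tm}(k)-\nu_t(\{k\})|\le C_1\left(\frac{\alpha(\Delta)}{\mu(\tb)}+\Delta\mu(\tb)+\mathbb{P}_{\tb}(\tau_{\tb}\le\Delta)\right)(t+\log\tm),
$$
valid for any $\tau(\tb)<\Delta<\tm$, so I must verify each of the three terms, multiplied by the logarithmic factor, vanishes as $n\to\infty$.

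The key estimates go as follows. By Lemma~\ref{l5}, $\mu(B)/\mu(\tb)=\mathcal{O}(1)$, so $|\log\mu(\tb)|$ and $|\log\mu(B)|$ are comparable, and the choice $\Delta = a|\log\mu(B)|$ used in the hypothesis differs from $a|\log\mu(\tb)|$ only by a bounded factor; hence the hypotheses $\frac{\alpha(\Delta)}{\mu(B)}\Delta\to0$ and $\mathbb{P}_B(\tau_B\le\Delta)\Delta\to0$ translate (up to constants) into the analogous statements for $\tb$. The log factor satisfies $t+\log\tm = t+\log t+|\log\mu(\tb)| = \mathcal{O}(|\log\mu(B)|)=\mathcal{O}(\Delta)$. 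Therefore the first term contributes $\mathcal{O}\!\left(\frac{\alpha(\Delta)}{\mu(\tb)}\Delta\right)=\mathcal{O}\!\left(\frac{\alpha(\Delta)}{\mu(B)}\Delta\right)\to0$ by assumption; the middle term contributes $\mathcal{O}(\Delta\mu(\tb)\cdot\Delta)=\mathcal{O}(\Delta^2\mu(B))$, which tends to $0$ because $\Delta^2 = a^2|\log\mu(B)|^2$ grows only polylogarithmically while $\mu(B)\to0$ (note $\mu(\bna)\to0$ since $N(n)\to\infty$ implies the cylinders and hence the measure shrink, or more directly from positive entropy); and the third term contributes $\mathcal{O}(\mathbb{P}_{\tb}(\tau_{\tb}\le\Delta)\Delta)\to0$ after transferring to $\tb$. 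Also one must check $\tau(\tb)<\Delta<\tm$ holds for large $n$: $\tau(\tb)\ge\tau(\bna)\to\infty$ but is dominated by $N(n)$-scale considerations — actually one uses $\mathbb{P}_{\tb}(\tau_{\tb}\le\Delta)\to0$ together with the short-return hypothesis to ensure $\Delta$ is an admissible gap, and $\Delta\ll\tm$ is clear since $\Delta$ is logarithmic in $1/\mu(B)$ while $\tm$ is of order $1/\mu(B)$.

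The main obstacle is the transfer of the short-return hypothesis from $\bna$ to $\bnb$, i.e.\ bounding $\mathbb{P}_{\tb}(\tau_{\tb}\le\Delta)$ in terms of $\mathbb{P}_B(\tau_B\le\Delta)$. Since $\tb\subset B$, a point $y\in\tb$ returning to $\tb$ within $\Delta$ steps in particular returns to $B$ within $\Delta$ steps, so $\{\tau_{\tb}\le\Delta\}\cap\tb\subset\{\tau_B\le\Delta\}\cap B$ and hence $\mathbb{P}_{\tb}(\tau_{\tb}\le\Delta)=\frac{\mu(\{\tau_{\tb}\le\Delta\}\cap\tb)}{\mu(\tb)}\le\frac{\mu(\{\tau_B\le\Delta\}\cap B)}{\mu(\tb)}=\mathbb{P}_B(\tau_B\le\Delta)\frac{\mu(B)}{\mu(\tb)}$, and the last ratio is $\mathcal{O}(1)$ by Lemma~\ref{l5}. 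This makes the transfer routine. A secondary point requiring care is that the hypothesis fixes $\Delta$ in terms of $\mu(\bna)$ whereas Theorem~\ref{Poisson_cylinder} is stated for the return set $\tb$; one resolves this by observing that replacing $a$ by $a/2$ (say) and using $|\log\mu(\tb)|\le|\log\mu(B)|+\mathcal{O}(1)$ keeps $\Delta$ in the valid range $\tau(\tb)<\Delta<\tm$ while preserving all three limits. Finally, combining with Lemma~\ref{l13}, $|\Theta_{B,m}(k)-e^{-t}t^k/k!|\le|\Theta_{B,m}(k)-\Theta_{\tb,\tm}(k)|+|\Theta_{\tb,\tm}(k)-\nu_t(\{k\})|\le 2t\vartheta_n(\epsilon)+o(1)\to0$, which is the claim.
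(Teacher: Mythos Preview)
Your approach is essentially identical to the paper's: reduce via Lemma~\ref{l13} to the cylinder approximation $\tb=\bnb$ and then invoke Theorem~\ref{Poisson_cylinder}. The paper's own proof consists of literally those two sentences and leaves every verification to the reader; what you have written is a correct expansion of the details (comparability $\mu(B)/\mu(\tb)=\mathcal{O}(1)$ from Lemma~\ref{l5}, the transfer $\mathbb{P}_{\tb}(\tau_{\tb}\le\Delta)\le\frac{\mu(B)}{\mu(\tb)}\mathbb{P}_B(\tau_B\le\Delta)$ via $\tb\subset B$, and the observation that $(t+\log\tm)=\mathcal{O}(\Delta)$ so that the extra $\Delta$ in the hypotheses absorbs the logarithmic factor).
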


\begin{proof} By Lemma~\ref{l13} it is sufficient to prove that
$$
\lim\limits_{n \rightarrow \infty} \Theta_{\tb,\tm}(k) =e^{-t}\frac{t^k}{k!}.
$$
The result the follows from from Theorem~\ref{Poisson_cylinder} with $m=1/\mu(\bna)$.
\end{proof}






Let us now prove Theorem~\ref{t5} where the $\phi$-mixing property is used 
 to control the short return times up to $\Delta$.


\begin{proof}[Proof of Theorem~\ref{t5}]
Again, by Lemma~\ref{l13} it is enough to show that 
$\Theta_{\tb,\tm}(k) \to e^{-t}\frac{t^k}{k!}$ as $n\to\infty$.
We apply Theorem~1 of~\cite{HP} to the set $\bnb \in\sigma( \mathcal{A}^{N(n)})$
and obtain (for some $c_1$)
$$
\left|\Theta_{\tb,\tm}(k)- e^{-t}\frac{t^k}{k!} \right|\\ 
\le c_1 t(t\vee 1) \inf_{\Delta>0}\{\Delta\mu(\tilde B) + \sum\limits_{j=\tau({\tilde B})}^{\Delta}\delta_{\tilde B} (j) + \frac{\phi(\Delta)}{\mu(\tilde B)} \}|\log \mu(\tilde B)|,
$$
where $\delta_{\tb}(j) =  \min\limits_{1 \le \omega \le j \wedge N(n)} \{ \mu(A_\omega (\tb))  + \phi(j-\omega)\} $ and, as before, $A_\omega(\tb) = \bigcup\limits_{A\in\mathcal{A}^\omega, A\cap \tb \ne \emptyset}A$.
Let $\eta' \in(\frac{1}{2+\kappa},1) $ so that the gaps $\Delta = \mu(\tilde B)^{-\eta'}$ are 
larger than $N(n)$. Then
$$
\left|\Theta_{\tb,\tm}(k)- e^{-t}\frac{t^k}{k!} \right|
\le c_1 t(t\vee 1) \left(\mu(\tilde B)^{1-\eta'} + \sum\limits_{j=\tau({\tb})}^{\Delta}\delta_{\tilde B} (j) + \mu(\tilde B)^{-1+\eta'(2+\kappa)} \right)|\log \mu(\tilde B)|.
$$
Since $\mu(\tilde B) = \mathcal{O}( \mu(\bna))$ we conclude by~\cite{BK}  
$|\log\mu(\tilde B)| = \mathcal{O}(n)$
and it thus remains to show that
 $\sum\limits_{j=\tau({\tilde B})}^{\Delta}\delta_{\tilde B} (j)  = o(\frac{1}{n})$. 
 
 Since $\tilde B \subset B$ we get $A_\omega(\tilde B)\subset A_\omega(B)$ 
 and therefore $A_\omega(B) \subset \bnb \cup \bnc$ for all $\omega \ge N(n)$. 
 As in Lemma~\ref{cluster of cylinders} let us put
 $$
 D_{N_0} = \{x \in X: \frac{1}{n} \sum\limits_{k=0}^{n-1} \chi_{U_\epsilon(\mathcal{A})}(T^k(x)) < \beta  
 \quad \forall n \geq N_0     \}.
 $$
 Then $A_\omega(\bna) \subset A_n(\bna) \subset \cn$ for all $x \in D_{N_0}$ and $\omega \ge n \ge N_0$.

By Theorem~\ref{period}, we can take $N_0$ large enough such that the set 
$$
\{x: \tau(\bna) > \frac{n}{2} \quad \forall n > N_0 \}
$$  
has measure arbitrarily close to $1$. Since $\bnb \subset \bna$ we conclude that  
$$
E_{N_0} = \{x: \tau(\bnb) > \frac{n}{2} \quad\forall n > N_0\} 
$$
also has measure arbitrarily close to $1$ for $N_0$ large enough.  For
 $x\in G_{N_0} = D_{N_0} \cap E_{N_0}$, and all $n > 4N_0$ we then split the following sum into 
 three parts:
\begin{align*}
\sum\limits_{j=\tau({\tilde B})}^{\Delta}\delta_{\tilde B} (j) = & \sum\limits_{j=\tau({\tilde B})}^{\Delta}\min\limits_{1 \le \omega \le j\wedge N(n)}\{\mu(A_\omega (\tilde B))+\phi(j - \omega)\}\\
\le &  \sum\limits_{j=n/2}^{\Delta}\min\limits_{1 \le \omega \le j\wedge N(n)}\{\mu(A_\omega (B))+\phi(j - \omega)\}\\
\le   &   \sum\limits_{j=n/2}^{2n-1}\min\limits_{1 \le \omega \le j}\{\mu(A_\omega (B))+\phi(j - \omega)\}
+\sum\limits_{j=2n}^{N(n)}\min\limits_{1 \le \omega \le j}\{\mu(A_\omega (B))+\phi(j - \omega)\} \\
&\hspace{4cm}+ \sum\limits_{j=N(n)+1}^{\Delta}\min\limits_{1 \le \omega \le  N(n)}\{\mu(A_\omega (B))+\phi(j - \omega)\}\\
=   & I + II+III.
\end{align*}
Since $\mu$ is $\phi$-mixing there exists a $\nu<1$ so that $\mu(A_m(x))<\nu^m$ for all $x$ 
and $m$ large enough~\cite{Abadi01}. We now assume that $\beta>0$ is small enough 
so that the size $\lambda_m$
of the  $(\beta,m)$-clusters $\mathcal{C}_{\beta,m}(x)$ satisfies $\lambda_m<\nu^{-\frac{m}2}$
for all $x\in D_{N_0}$
 (see~\eqref{lambda.estimate}).
Thus 
\begin{equation}\label{cluster.estimate}
\mu(\mathcal{C}_{\beta,m}(x)) \le \lambda_m\nu^m<\nu^\frac{m}2
\end{equation}
for all $m$ large enough and $x\in D_{N_0}$. We now estimate the three parts on the RHS above as follows:\\
(I) For the term $I$, we also take $\omega=\frac{j}{2}$. Since $\bna \subset B_{\epsilon, \frac{n}{4}}(x)$ and $\frac{j}{2} \ge \frac{n}{4} \ge N_0$ we have 
$$
A_\frac{j}{2}(\bna) \subset A_\frac{j}{2}( B_{\epsilon, \frac{n}{4}}(x)) \subset A_{\frac{n}{4}}(B_{\epsilon, \frac{n}{4}}(x)) \subset \mathcal{C}_{\beta, \frac{n}{4}}(x).
$$
The bound~\eqref{cluster.estimate} then yields
$$
I \le\sum\limits_{j=n/2}^{2n-1}\mu(A_{\frac{j}{2}} (B))+\phi(\frac{j}{2})
 \le 2n \mu(\mathcal{C}_{\beta, \frac{n}{4}}(x)) + \frac{c_2}{n^{1+\kappa}} = o(\frac{1}{n}).
$$
(II) For the second term we take $\omega = \frac{j}{2}$  and obtain
$$
II \le  \sum\limits_{j=2n}^{N(n)}\left(\mu(A_{j/2} (B))+\phi(\frac{j}{2})\right)
\le  \sum\limits_{j=2n}^{N(n)} \mu(\mathcal{C}_{\beta, \frac{j}{2}}(x)) + \frac{c_1}{n^{1+\kappa}}
$$
since $\frac{j}{2} \ge n > N_0$. By~\eqref{cluster.estimate} we conclude  that $II = o(\frac{1}{n})$.\\
(III) For the third term $III$ we take $\omega = \frac{N(n)}{2}$. Lemma~\ref{l5} shows that $\mu(A_{N(n)/2} (B) )= \mathcal{O}(1)\mu(B)$. We obtain
\begin{align*}
III \le & \sum\limits_{j=N(n)+1}^{\Delta}\left(\mu(A_{N(n)/2} (B))+\phi(j - \frac{N(n)}{2})\right)\\
\le & \sum\limits_{j=N(n)+1}^{\Delta}\mathcal{O}(1)\mu(\tilde B) + \frac{c_2}{N(n)^{1+\kappa}}\\
= & \mathcal{O}(1) \Delta\mu(\tilde B)+ \frac{c_2}{N(n)^{1+\kappa}}\\
= &o(\frac{1}{n}).
\end{align*}
The three estimates combined prove Theorem~\ref{t5}.
 \end{proof}

To prove Theorem~\ref{t1} and \ref{t1'} we  need to verify that (\ref{t5e}) is satisfied.

\begin{proof}[Proof of Theorem~\ref{t1}.]Under the hypothesis of Theorem~\ref{t1} we take $\eta\in (\frac{1}{\xi \zeta},1)$ and put $N(n) = \mu(\bna)^{-\eta}$. This yields
\begin{align*}
\frac{n \cdot\psi(\epsilon,\gamma_{N(n)-k},T^kx)}{\mu(\bna)}& \le \frac{n C_\epsilon}{N(n)^{\xi \cdot  \zeta}|\log\gamma|^{\zeta}\mu(\bna)}\\
& = C_\epsilon' n \mu(\bna)^{\eta\xi\zeta-1} \to 0
\end{align*}
since $\eta \xi \zeta >1$. Consequently $\vartheta_n(\epsilon)\to0$ as $n\to\infty$ for every 
small enough $\epsilon>0$ and the statement of Theorem~\ref{t1} now follows from
Theorem~\ref{t5}
\end{proof}

\begin{proof}[Proof of Theorem~\ref{t1'}.] Similarly the choice of $\eta \in (\frac{1}{\alpha\xi},1)$ 
and $N(n) = \mu(\bna)^{-\eta}$  yield
$$
\frac{n \cdot\psi(\epsilon,\gamma_{N(n)-k},T^kx)}{\mu(\bna)} \le \frac{n C_\epsilon N(n)^{-\alpha\xi}}{\mu(\bna)}
 = C_\epsilon' n \mu(\bna)^{\eta\alpha\xi-1} \to 0
$$
since $\eta\alpha\xi>1$. Again $\vartheta_n(\epsilon)\to0$ as $n\to\infty$ for every 
small enough $\epsilon>0$ and the theorem follows from Theorem~\ref{t5}.
\end{proof}






\end{document}